\documentclass[12pt]{article}
\setlength{\oddsidemargin}{0.25in}
\setlength{\evensidemargin}{0.25in}
\setlength{\textwidth}{6in}
\setlength{\headsep}{0in}
\setlength{\headheight}{0in}
\setlength{\topmargin}{0.25in}
\setlength{\textheight}{8.5in}

\title{Minimal orbits of promotion}
\author{Kevin Purbhoo\thanks{
Research of Purbhoo was 
supported by an NSERC Discovery grant.} %
\ and 
Donguk Rhee
}

\usepackage{latexsym, amsthm, amsmath, enumerate}
\usepackage{graphicx, xspace, ifthen, rotating, pict2e}
\usepackage[svgnames]{xcolor}
\usepackage[charter]{mathdesign}
\usepackage{algpseudocode, algorithmicx}

\usepackage{young}
\ysetshade{Blue!30}
\ysetaltshade{Green!50}
\YSetShade{Green!30}
\YSetAltShade{Purple!45}

\newlength\circlesize
\setlength\circlesize{.33333333\textwidth}

\setcounter{MaxMatrixCols}{20}
\allowdisplaybreaks[1]

\newcommand{\bigmid}{\ \big|\ }

\newcommand{\ZZ}{\mathbb{Z}}

\newcommand{\Hw}{{\widehat w}}
\newcommand{\HU}{{\widehat U}}
\newcommand{\Hsigma}{{\widehat \sigma}}
\newcommand{\la}[1]{{\lambda_{#1}}}
\newcommand{\veela}[1]{{\smash{\lambda^\vee}\!\!\!_{#1}}}
\newcommand{\primela}[1]{{\smash{\lambda'}\!_{#1}}}

\newcommand{\nth}{\ensuremath{^\text{th}}\xspace}

\newcommand{\SL}{\mathrm{SL}}

\newcommand{\SYT}{\mathsf{SYT}}

\newcommand{\calO}{\mathcal{O}}

\newcommand{\aug}{\mathrm{aug}}

\newcommand{\promote}{\partial}
\newcommand{\trunc}{\mathrm{Trunc}}

\newcommand{\identity}{\mathrm{id}}

\newcommand{\one}{{\mathchoice%
{\yng[bb][1ex](1)}
{\yng[bb][1ex](1)}
{\yng[bb][.7ex](1)}
{\yng[bb][.7ex](1)}
}}
\newcommand{\diag}{{\yeveryframe{\yshade}\one}}
\newcommand{\Rect}{{\setlength{\yframethickness}{.7pt}\yhwratio{2:3}%
\mathchoice%
{{\yng[bb][1.1ex](1)}}
{{\yng[bb][1.1ex](1)}}
{{\yng[bb][.7ex](1)}}
{{\yng[bb][.7ex](1)}}
}}
\newcommand{\Hone}{{\widehat{\one}}}
\newcommand{\boxseq}{{\setlength{\yframethickness}{.25ex}
{\yng[bb][.75ex](1)}\,}}
\newcommand{\SYTRect}{\SYT\big(\Rect\big)}

\newenvironment{packedenumi}{
\begin{enumerate}[(i)]
  \setlength{\itemsep}{0pt}
}{\end{enumerate}}

\newenvironment{packedenum}{
\begin{enumerate}
  \setlength{\itemsep}{0pt}
}{\end{enumerate}}

\newtheorem{lemma}{Lemma}
\newtheorem{theorem}[lemma]{Theorem}

\newtheorem{proposition}[lemma]{Proposition}

\theoremstyle{definition}

\newtheorem{algorithm}{Algorithm}


\definecolor{DarkBlue}{rgb}{0, 0.1, 0.55}
\definecolor{DarkRed}{rgb}{0.45, 0, 0}
\newcommand{\defn}[1]{\textbf{#1}}

\begin{document}
\maketitle

\begin{abstract}
We give a
bijection between the symmetric group $S_n$, and the set of
standard Young tableaux of rectangular shape $m^n$, $m \geq n$,
that have order $n$ under \emph{jeu de taquin} promotion.
\end{abstract}


\section{Introduction}

Fix positive integers $m \geq n$, and let $\Rect$ be either
the $m \times n$ rectangle, or the $n \times m$ rectangle.
The \defn{promotion} map $\promote: \SYTRect \to \SYTRect$
defines an action of $(\ZZ, +)$ 
on the set of standard Young tableaux of shape $\Rect$\,.  
For $T \in \SYT(\Rect)$,
$\promote T$ is computed by deleting the entry $1$ from $T$, 
decrementing each entry by $1$, rectifying, and finally adding
an entry $mn$ in the lower-right corner.
The interest in this action stems from its
connections to geometry and representation theory, and its
striking combinatorial properties,
(see \cite{FK, Pur-ribbon, Rho, Sta, Wes}).

Let $\calO_r := \{T \in \SYTRect \mid \promote^r T = T\}$ denote
the set of tableaux whose order under promotion divides $r$.
By a theorem of Haiman~\cite{Hai},
$\promote^{mn} T = T$ for all $T \in \SYTRect$; hence
$\calO_r$ is empty if $r$ is coprime to $mn$.
It is also not hard to see that $\calO_r$ is empty for $r < n$.
The minimal orbits of promotion, therefore, have order $n$.

The action of promotion on $\SYTRect$ exhibits a 
cyclic sieving phenomenon, as defined in \cite{SRW}: 
we have $|\calO_r| = F(\zeta^r)$, 
where
$F(q)$ is a $q$-analogue of the hook 
length formula for $|\SYTRect|$, 
and $\zeta$ is a primitive $(mn)$\nth root of unity.
The quantity $F(\zeta^r)$ appears in
a number of other places in representation theory and combinatorics,
which proffers a variety of avenues of proof for this cyclic sieving
theoerm.  It was first proved by Rhoades 
using Kazhdan-Lusztig 
theory \cite{Rho}. Subsequently other proofs were found using 
representation theory 
of $\SL_n$ 
\cite{Wes}, 
and the geometry of the Grassmannian \cite{Pur-ribbon} and
the affine Grassmannian~\cite{FK}.
Simpler, more combinatorial proofs are known in special cases 
when $n =2, 3$ \cite{PPR}.
The survey~\cite{Sag-CSP} discusses of a number of related results.
However, at present there is no known combinatorial proof in general, 
nor any proof that gives an \emph{effective}
description of the sets $\calO_r$.

The purpose of this paper is to give an explicit combinatorial 
construction 
of the orbits in $\calO_n$, i.e. the minimal orbits of promotion.  
Using Rhoades' cyclic sieving theorem, 
one can compute that $|\calO_n| = n!$.
Our main result gives a bijection between the symmetric 
group $S_n$ and $\calO_n$.  Under this bijection promotion corresponds
to right-multiplication by the $n$-cycle $(1\ n\ n{-}1\ \dots\ 2)$.
There are a number of arbitrary choices involved in constructing the 
bijection, and much of the proof is concerned with showing that the 
construction is in fact well-defined.

Choose a skew shape $\la+/\la- \subset \Rect$\,, consisting
of $n$ boxes $\diag_1, \diag_2, \dots, \diag_n$,  such that 
$\diag_{i+1}$ is strictly above and strictly right 
of $\diag_i$, for $i = 1, \dots, n-1$.
We call $\la+/\la-$ a \defn{diagonal} of $\Rect$\,,
(see Figure~\ref{fig:diagonal}).
For each permutation $w \in S_n$, we define a tableau $T_w^\la+$ 
of shape $\la+$, using a procedure similar to rectification.  
In the following algorithm, $T$ is a tableau
under construction.  If $\one$ is a box of $\la+$, we
write $\one \in \la+$, and $T[\one]$ 
denotes the entry of $T$ in box $\one$.

\begin{figure}[tb]
\begin{center}
\begin{young} 
 & & & &!& \\
 & & &!& & \\
 & &!& & & \\
!& & & & & \\
\end{young}
\end{center}
\caption{An example of a diagonal of $\protect\Rect$\,: here 
$n = 4$, $m=6$, $\la+ = 5431$ and $\la- = 432$.}
\label{fig:diagonal}
\end{figure}

\begin{algorithm}
\label{alg:forward} \emph{INPUT:} A permutation $w \in S_n$.
\begin{algorithmic}
\State
Begin with
$T[\diag_i] := w(i)$, for $i = 1, \dots, n$, leaving
all boxes of $\la-$ unfilled; 
\While{$\mathrm{shape}(T) \neq \la+$}
\State  Let $\mu \subset \la+$ be the unfilled boxes to the left of $T$; 
\State  Choose any corner box $\one \in \mu$; 
\State  Let $T'$ be the tableau obtained by sliding $\one$ through $T$; 
\State  If the final position of the sliding path is $\diag_i$, then set 
$T'[\diag_i] := T[\diag_i] + n$; 
\State  Set $T := T'$;
\EndWhile
\State \Return the resulting tableau, $T_w^\la+ := T$.
\end{algorithmic}
\end{algorithm}

Algorithm~\ref{alg:forward} is an extension of the procedure
for computing the insertion tableau of $w$ via rectification. 
As we slide empty boxes of $\la-$ through $T$, we also refill
the boxes of $\la+/\la-$.
Since the new entries are all greater
than $n$, the subtableau of $T_w^\la+$ formed
by entries $1, \dots, n$ will be the insertion tableau of $w$.
An example of the algorithm is given in Figure~\ref{fig:forward}.
Note that $T_w^\la+$ is not a standard Young tableau, since the
entries are not $\{1, \dots, |\la+|\}$.

\begin{figure}[tb]
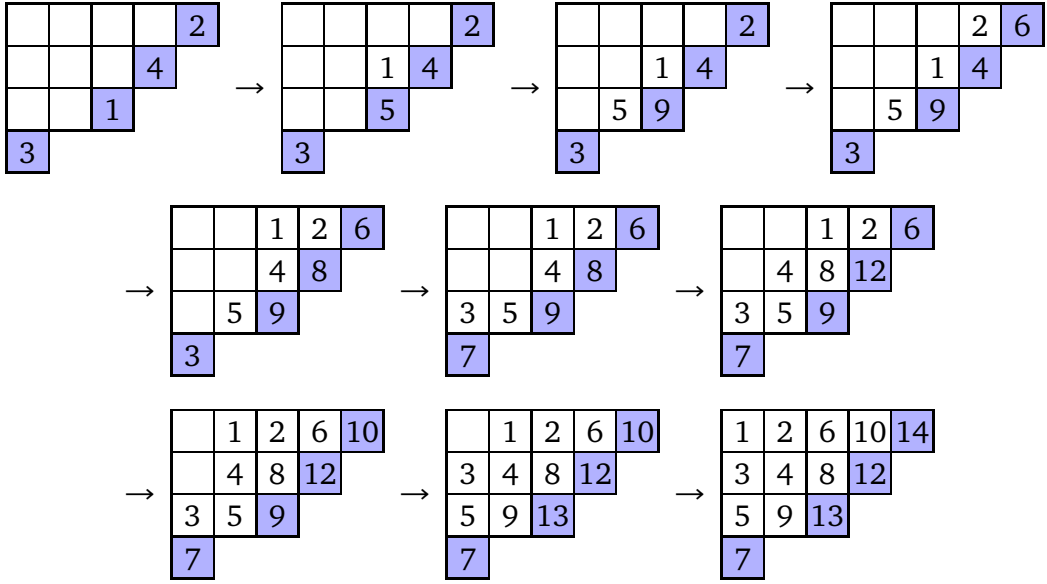

\begin{gather*}
{\begin{young}[c]
   &   &   &   &!2 \\
   &   &   &!4  \\
   &   &!1 \\
!3
\end{young}}
\ \to \ %
{\begin{young}[c]
   &   &   &   &!2 \\
   &   & 1 &!4  \\
   &   &!5 \\
!3
\end{young}}
\ \to \ %
{\begin{young}[c]
   &   &   &   &!2 \\
   &   & 1 &!4  \\
   & 5 &!9 \\
!3
\end{young}}
\ \to \ %
{\begin{young}[c]
   &   &   & 2 &!6 \\
   &   & 1 &!4  \\
   & 5 &!9 \\
!3
\end{young}} \\[1.5ex]
\ \to \ %
{\begin{young}[c]
   &   & 1 & 2 &!6 \\
   &   & 4 &!8  \\
   & 5 &!9 \\
!3
\end{young}}
\ \to \ %
{\begin{young}[c]
   &   & 1 & 2 &!6 \\
   &   & 4 &!8  \\
 3 & 5 &!9 \\
!7
\end{young}}
\ \to \ %
{\begin{young}[c]
   &   & 1 & 2 &!6 \\
   & 4 & 8 &!12  \\
 3 & 5 &!9 \\
!7
\end{young}} \\[1.5ex]
\ \to \ %
{\begin{young}[c]
   & 1 & 2 & 6 &!10 \\
   & 4 & 8 &!12  \\
 3 & 5 &!9 \\
!7
\end{young}}
\ \to \ %
{\begin{young}[c]
   & 1 & 2 & 6 &!10 \\
 3 & 4 & 8 &!12  \\
 5 & 9 &!13 \\
!7
\end{young}}
\ \to \ %
{\begin{young}[c]
 1 & 2 & 6 & 10 &!14 \\
 3 & 4 & 8 &!12  \\
 5 & 9 &!13 \\
!7
\end{young}}
\end{gather*}
\caption{Construction of $T_w^\la+$, with $w = 3142$, and 
$\la+ = 5431$.}
\label{fig:forward}
\end{figure}

\begin{theorem}
\label{thm:welldefined}
The definition of $T_w^\la+$ is independent of the choices in 
Algorithm~\ref{alg:forward}.
\end{theorem}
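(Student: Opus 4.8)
The plan is to read Algorithm~\ref{alg:forward} as a terminating rewriting system on partially filled tableaux and to deduce well-definedness from a local confluence (diamond) property. The first step is strong normalization. A single pass of the while-loop empties a chosen corner $\one \in \mu$, slides the resulting hole outward through $T$, and---because each row of $\la+$ contains exactly one box of $\la+/\la-$, namely its rightmost box (and dually for columns in the other orientation)---the hole necessarily comes to rest at one of the boxes $\diag_1,\dots,\diag_n$; the ``if'' in the algorithm therefore always fires and that diagonal box is refilled. Consequently the only cell whose filled/unfilled status changes is $\one$, so each slide removes exactly one box from $\mu$, and every sequence of choices terminates after precisely $|\la-|$ slides. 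By Newman's lemma it then suffices to prove the diamond property: whenever $\mu$ has two distinct corners $\one_1 \ne \one_2$, sliding $\one_1$ then $\one_2$ gives the same tableau as sliding $\one_2$ then $\one_1$. That $\one_2$ remains a corner after sliding $\one_1$, and conversely, is immediate, since deleting a box from $\mu$ cannot destroy a corner elsewhere.

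To organize the diamond, I would first strip away the refills. Ignoring the refilling step, each operation is an ordinary jeu de taquin forward slide, and the commutation of forward slides into two distinct inner corners is classical: the two orders transport the existing entries to the same positions. The real content of the theorem is that reinstating the interleaved refills preserves this agreement---that is, the two orders refill the same diagonal boxes with the same values. This is subtle precisely because a refill modifies the tableau \emph{between} the two slides: after sliding $\one_1$ the box $\diag_i$ at which it terminated carries the value $T[\diag_i]+n$, and this altered entry is present when $\one_2$ is subsequently slid, whereas in the opposite order $\one_2$ is slid while $\diag_i$ still carries its old value.

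This refill bookkeeping is the main obstacle, and I expect it to require a genuine local analysis rather than a one-line appeal to commutation. The only way the refill after sliding $\one_1$ can affect the slide of $\one_2$ is if $\one_2$'s hole becomes adjacent (to the north or west) to the refilled box $\diag_i$; away from $\diag_i$ the two slides proceed exactly as in the unrefilled, classical situation. At such an adjacency the hole chooses between $\diag_i$, now carrying the larger value $T[\diag_i]+n$, and its other admissible neighbour, so the path can in principle diverge from the classical one, and both slides can even terminate at the same $\diag_i$, producing a double refill. The delicate case is exactly this equal-terminal one: in one order the values emitted from $\diag_i$ are $T[\diag_i]$ then $T[\diag_i]+n$, in the other order the roles are a priori reversed, and one must verify---by tracking the at-most-one crossing of the two sliding paths, and using that each refill only raises the value at $\diag_i$ by $n$ above what was there---that the two emitted values are deposited in the same two cells and that $\diag_i$ ends holding $T[\diag_i]+2n$ in both orders. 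When the two terminal diagonals are distinct, the analogous but easier analysis shows each path is insensitive to the other's refill, and classical commutation closes the square directly.

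Assembling these cases establishes the diamond property, and Newman's lemma then gives that the terminal tableau $T_w^\la+$ is independent of all choices, which is the assertion of the theorem. I would single out the equal-terminal double-refill case as the crux, since it is the unique point at which the $+n$ bookkeeping genuinely interacts with the order of slides; an appealing alternative, which would make well-definedness more automatic, is to encode the entire process---refills included---in a growth-diagram formalism, where local commutativity of the growth rules yields confluence without explicit case analysis.
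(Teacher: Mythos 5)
Your strategy fails at its central step: the diamond property you want is \emph{false} for Algorithm~\ref{alg:forward}, so Newman's lemma cannot be applied in the way you propose. Here is a concrete counterexample. Take $n=3$, $\la+ = (3,2,1)$, $\la- = (2,1)$, so that (in (row, column) coordinates) $\diag_1 = (3,1)$, $\diag_2 = (2,2)$, $\diag_3 = (1,3)$, and take $w = 213$, giving initial entries $T[\diag_1]=2$, $T[\diag_2]=1$, $T[\diag_3]=3$. The corners of $\mu = \la-$ are $(1,2)$ and $(2,1)$. Sliding $(1,2)$ first, the $1$ moves up and $\diag_2$ is refilled with $4$; then sliding $(2,1)$, the $2$ moves up and $\diag_1$ is refilled with $5$; after two slides the entries are $1$ at $(1,2)$, $3$ at $(1,3)$, $2$ at $(2,1)$, $4$ at $(2,2)$, $5$ at $(3,1)$. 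In the opposite order, sliding $(2,1)$ first moves the $1$ \emph{left} and refills $\diag_2$ with $4$; then sliding $(1,2)$, the refilled $4$ at $\diag_2$ deflects the hole, the $3$ moves left, and $\diag_3$ is refilled with $6$; after two slides the entries are $3$ at $(1,2)$, $6$ at $(1,3)$, $1$ at $(2,1)$, $4$ at $(2,2)$, $2$ at $(3,1)$. The two-step results disagree --- even the \emph{set} of refilled diagonal boxes differs ($\{\diag_1,\diag_2\}$ versus $\{\diag_2,\diag_3\}$) --- and agreement is restored only after the third slide into $(1,1)$, both orders then producing rows $1\,3\,6$ / $2\,4$ / $5$. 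This also refutes the specific case analysis you sketch: the problematic case is not only the ``equal-terminal double refill,'' since the order of slides can change the terminal diagonal of the second slide outright, contrary to your claim that distinct-terminal slides are insensitive to each other's refills. A secondary error compounds this: the refill-free commutation you call ``classical'' does not exist. Two jeu de taquin slides into distinct inner corners generally consume different outer corners and yield tableaux of different shapes (e.g.\ on $(3,3)/(2,1)$ with entries $1$ at $(1,3)$, $2$ at $(2,2)$, $3$ at $(2,3)$, the two orders end in shapes $(3,1)/(1)$ and $(2,2)/(1)$). Sch\"utzenberger's theorem asserts only that the \emph{final} rectification is order-independent, and every known proof goes through plactic/Knuth theory or dual equivalence rather than a local diamond; establishing joinability of the divergent intermediate states is essentially the full strength of the theorem, so the rewriting-system framing gains nothing. (Your termination argument, by contrast, is fine: since refills keep the outer shape equal to $\la+$ and every removable corner of $\la+$ lies on the diagonal, each slide does end at some $\diag_i$.)

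For comparison, the paper avoids any local confluence claim. It encodes the sequence of choices by a tableau $U \in \SYT(\la-)$, reformulates the algorithm via reverse slides (Algorithm~\ref{alg:sequence}) applied to a sequence $\sigma$, proves that the resulting box sequence $\boxseq_\sigma^U$ transforms equivariantly under strict Knuth transformations of $\sigma$ using Haiman's dual-equivalence machinery (Proposition~\ref{prop:equivariant}), and then reduces to a single explicitly chosen sequence $\sigma_{d_1d_2d_3\dots}$ equivalent to $w^*$ (Lemma~\ref{lem:equivalent}, via a cyclage argument), for which independence of $U$ is verified directly by tracking the columns of the sliding endpoints (Lemma~\ref{lem:welldefined}). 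The remarks in Section~\ref{sec:remarks} explicitly warn that naive transfers from ordinary rectification break down here --- for instance, dual-equivalent skew $U$ need not give equal box sequences --- which is exactly the phenomenon your counterexample above exhibits at the level of individual slides. If you want to salvage a local approach, you would need an invariant of the intermediate states that is preserved by single slides and determines the final tableau (a growth-diagram formalism, as you suggest, would have to come with proven local rules that accommodate the $+n$ refills); no such invariant is currently known, and the paper states this as an open difficulty.
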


Theorem~\ref{thm:welldefined} is analogous to the well known fact 
that ordinary rectification is well-defined \cite{Sch}.
However, despite the similarity between Algorithm~\ref{alg:forward}
and rectification, one cannot easily deduce one from the other.
We discuss some of the difficulties in Section~\ref{sec:remarks}.

Similarly, we define a tableau $T_w^{\Rect/\la-}$ of
shape $\Rect/\la-$, using reverse slides.

\begin{algorithm}
\label{alg:reverse} \emph{INPUT:} A permutation $w \in S_n$.
\begin{algorithmic}
\State
Begin with $T[\diag_i] := w(i) + (m-1)n$, for $i = 1, \dots, n$, 
and all boxes of $\Rect/\la+$ unfilled; 
\While{$\mathrm{shape}(T) \neq \Rect/\la-$}
\State  Let $\mu \subset \Rect/\la-$ be the unfilled boxes to the 
  right of $T$; 
\State  Choose any corner box $\one$ of $\mu$; 
\State  Let $T'$ be the tableau obtained by reverse-sliding $\one$ through $T$; 
\State  If final position of the sliding path is $\diag_i$, then set 
$T'[\diag_i] := T[\diag_i] - n$; 
\State  Set $T := T'$; 
\EndWhile
\State \Return the resulting tableau, $T_w^{\Rect/\la-} := T$.
\end{algorithmic}
\end{algorithm}

\begin{figure}[tb]
\begin{gather*}
{\begin{young}[c]
 ,  & , & ,  & ,  &!22 & \\
 ,  & , & ,  &!24 &    & \\
 ,  & , &!21 &    &    & \\
!23 &   &    &    &    &
\end{young}}
\ \to \ %
{\begin{young}[c]
 ,  & , & ,  & ,  &!22 & \\
 ,  & , & ,  &!20 &    & \\
 ,  & , &!21 & 24 &    & \\
!23 &   &    &    &    &
\end{young}}
\ \to \ %
{\begin{young}[c]
 ,  & , & ,  & ,  &!22 & \\
 ,  & , & ,  &!20 &    & \\
 ,  & , &!21 & 24 &    & \\
!19 & 23 &    &    &    &
\end{young}} \\[1.5ex]
\ \to \ %
{\begin{young}[c]
 ,  & , & ,  & ,  &!22 & \\
 ,  & , & ,  &!20 &    & \\
 ,  & , &!21 & 24 &    & \\
!15 & 19 & 23 &    &    &
\end{young}}
\ \to \ \  \dots \ \ \to \ %
{\begin{young}[c]
 ,  & , & ,  & ,  & !14  &18  \\
 ,  & , & ,  &!12 & 16 & 20  \\
 ,  & , &!13 & 17 & 21 & 22  \\
!7  &11 & 15 & 19 & 23 & 24
\end{young}}
\end{gather*}
\caption{Construction of $T_w^{\protect\Rect/\la-}$, with 
$w = 3142$, $m = 6$, and $\la+ = 5431$.}
\label{fig:reverse}
\end{figure}

An example is given in Figure~\ref{fig:reverse}.
Since Algorithm~\ref{alg:reverse} is essentially 
``Algorithm~\ref{alg:forward} turned 
upside-down'', Theorem~\ref{thm:welldefined} implies that 
the definition of $T_w^{\Rect/\la-}$ is independent of choices.
We combine these two constructions to
produce a tableau $T_w$ of shape $\Rect$\,:
for each box $\one \in \Rect$\,, let
\[
T_w[\one] := 
\begin{cases}
T_w^\la+[\one] &\quad \text{if }\one \in \la+ \\
T_w^{\Rect/\la-}[\one] &\quad \text{otherwise.}
\end{cases}
\]
For example, for $w=3142$, $m=6$, we combine the tableaux in 
Figures~\ref{fig:forward} and~\ref{fig:reverse} to 
obtain
\[
    T_w = 
{\begin{young}[c]
 1  & 2 & 6  & 10  & 14  &18  \\
 3  & 4 & 8  & 12 & 16 & 20  \\
 5  & 9 & 13 & 17 & 21 & 22  \\
 7  &11 & 15 & 19 & 23 & 24
\end{young}}\ .
\]

Since the definition of $T_w$ is piecewise, it is not immediately
clear that this is always a sensible construction.  We will show
that the constructions in Algorithms~\ref{alg:forward} 
and~\ref{alg:reverse} agree on the diagonal, i.e. 
$T_w^\la+[\diag_i] = T_w^{\Rect/\la-}[\diag_i]$,
for $i=1, \dots, n$.  This is the first step in proving:

\begin{theorem}
\label{thm:combine}
$T_w$ is a standard Young tableau.  Moreover, the definition of $T_w$ is 
independent of the choice of diagonal $\la+/\la-$.
\end{theorem}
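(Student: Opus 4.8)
The plan is to prove Theorem~\ref{thm:combine} in three stages: first establish the agreement of the two constructions on the diagonal, then deduce that $T_w$ is a standard Young tableau, and finally prove independence from the choice of diagonal. For the first stage, I would exploit the ``upside-down'' symmetry relating Algorithm~\ref{alg:forward} and Algorithm~\ref{alg:reverse}. Concretely, consider the complementary operation that sends a tableau $T$ of shape $\Rect$ to the tableau obtained by rotating $\Rect$ by $180^\circ$ and replacing each entry $k$ by $mn+1-k$. Under this rotation, forward slides become reverse slides and vice versa, and the diagonal $\la+/\la-$ maps to a diagonal of the rotated rectangle. I would check that the rotation carries the seeding $T[\diag_i] := w(i)$ of Algorithm~\ref{alg:forward} to the seeding $T[\diag_i] := w_0(i)+(m-1)n$ of Algorithm~\ref{alg:reverse} for a suitable related permutation, and that the increment/decrement rules on the diagonal correspond. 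The upshot would be a clean identity expressing $T_w^{\Rect/\la-}$ in terms of $T_{w'}^{\la+}$ for a rotated diagonal and a reindexed permutation; combined with Theorem~\ref{thm:welldefined} (which guarantees both constructions are choice-independent, so I may pick whichever slide order is convenient) this should force $T_w^\la+[\diag_i] = T_w^{\Rect/\la-}[\diag_i]$ for every $i$.

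Once agreement on the diagonal is known, I would show $T_w$ is standard by verifying that entries increase along rows and down columns across the ``seam'' $\la+$. Within $\la+$ and within $\Rect/\la-$ separately, the slide procedures produce tableaux that are column-strict and row-strict by construction (sliding preserves these inequalities), so the only inequalities in question are those relating a box just inside $\la+$ to an adjacent box just outside it. The boxes of the diagonal lie on the common boundary, and here the two pieces share the same values; I would argue that because every entry of $T_w^{\Rect/\la-}$ exceeds every entry contributed to $\la+/\la-$ in the forward algorithm (all ``new'' entries are $>n$ on one side and the reverse entries start near $mn$), the required monotonicity across the seam holds. I would also verify the entry set is exactly $\{1,\dots,mn\}$ by a counting/parity argument: the forward algorithm fills $\la+$ with $n$ original values in $\{1,\dots,n\}$ plus increments of $n$, the reverse fills the complement with the remaining residues, and together these exhaust each residue class modulo $n$ the correct number of times.

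For independence from the choice of diagonal, I would use a local-move argument: any two diagonals of $\Rect$ are connected by a sequence of elementary moves that shift a single box $\diag_i$ to an adjacent diagonal position while keeping the others fixed (one can pass between diagonals by such single-box adjustments since the space of diagonals is ``connected'' under these moves). I would then show that $T_w$ is invariant under one such elementary move. This reduces to a local commutation statement about how the forward and reverse sliding paths interact when one diagonal box is relocated, which I expect to follow from the choice-independence already granted by Theorem~\ref{thm:welldefined} together with a direct analysis of the two affected regions.

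The hard part will be the first stage---pinning down the exact correspondence between the forward and reverse algorithms under $180^\circ$ rotation and proving the diagonal values genuinely coincide. The difficulty is that the two algorithms fill \emph{different} regions ($\la+$ versus $\Rect/\la-$) and only overlap on the one-dimensional diagonal, so I cannot simply compare tableaux entrywise; I must track how an entry placed on $\diag_i$ by a forward slide is reproduced by an independent reverse computation. Making the rotation/reindexing bookkeeping precise, and confirming that the increment-by-$n$ rule on $\diag_i$ in Algorithm~\ref{alg:forward} matches the decrement-by-$n$ rule in Algorithm~\ref{alg:reverse} after rotation, is where the genuine content lies; the standardness and diagonal-independence steps should then be comparatively routine consequences.
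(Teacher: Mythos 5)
Your stage one has a genuine gap, and it is the load-bearing one. The rotation identity you describe is precisely the paper's equation~\eqref{eqn:forwardreverse}, $T_w^{\Rect/\la-} = (T_{w_0ww_0}^{\veela-})^\vee$, but all it does is translate the reverse computation into a forward computation for a \emph{different} permutation $w_0ww_0$ on a \emph{different} shape $\veela-$. It cannot ``force'' $T_w^\la+[\diag_i] = T_w^{\Rect/\la-}[\diag_i]$, and Theorem~\ref{thm:welldefined} cannot help: choice-independence says nothing about what the diagonal values actually are, and the two constructions share only those $n$ boxes. What is missing is a closed-form evaluation of the diagonal entries. The paper gets this from Lemma~\ref{lem:equivalent}, Lemma~\ref{lem:welldefined}(ii), Proposition~\ref{prop:strategy}(ii) and Proposition~\ref{prop:reformulate}, which together yield
\[
  T_w^\la+[\diag_i] = w(i) + n\cdot\Big(C_i - \#\{j \mid d_j \geq i\} + \#\{j \mid d_j \geq n{+}1{-}i\} - 1\Big),
\]
where $d_1 > \dots > d_t$ are the descents of $w$ and $C_i$ the column lengths of $\la+$; this rests on the descent-sequence and strict-Knuth machinery of Sections~2--3 (replacing $w^*$ by the equivalent sequence $\sigma_{d_1d_2d_3\dots}$, whose box sequence can be located column by column). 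Diagonal agreement then falls out because this formula is manifestly symmetric under the substitutions $w \mapsto w_0ww_0$, $d_j \mapsto n-d_j$, $C_i \mapsto m{+}1{-}C_i$ induced by the rotation. Your proposal contains no counterpart to this quantitative computation --- the rotation is bookkeeping; the formula is the content --- and you concede as much in your final paragraph without supplying a mechanism.

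Your stages two and three also do not go through as sketched. The premise that ``every entry of $T_w^{\Rect/\la-}$ exceeds every entry contributed to $\la+/\la-$'' is simply false: in the paper's running example ($w = 3142$, $m=6$), $T_w^{\Rect/\la-}$ contains $7$ while $T_w^\la+$ contains $14$; the two entry sets interleave, so no magnitude-separation argument can give the seam inequalities. The paper's actual route is structural: after assuming WLOG that $n$ is the number of columns, every column contains exactly one diagonal box, so no \emph{vertically} adjacent pair straddles the seam --- each lies wholly in $\la+$ or wholly in $\Rect/\la-$ --- and column-strictness follows from diagonal agreement alone. Row-strictness is \emph{not} checked across the seam at all: it is obtained by first proving independence of the diagonal and then specializing to the staircase $\la+ = (n, n{-}1, \dots, 1)$, where horizontal neighbours also never straddle the seam. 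Note the forced logical order: diagonal-independence must precede the completion of standardness, so your ``comparatively routine'' stages are intertwined. Finally, your local-move idea for stage three matches the hypothesis of Proposition~\ref{prop:strategy}(iii) (diagonals differing by one box), but invariance under such a move does not follow from Theorem~\ref{thm:welldefined} plus ``direct analysis'': it requires the \emph{compatibility} of the box sequences $\boxseq_\sigma^\la+$ across diagonals, which the paper extracts from the explicit column description in Lemma~\ref{lem:welldefined}(i),(iii) --- again the same engine your stage one lacks.
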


Our main result states that this construction gives the minimal
orbits of promotion.

\begin{theorem}
\label{thm:bijection}
The map $w \mapsto T_w$ defines a bijection between $S_n$ and $\calO_n$.
Specifically the following hold:
\begin{packedenumi}
\item  For all $w \in S_n$, $\promote T_w= T_{wc}$, where 
$c = (1\ n\ n{-}1\ \dots\ 2)$.
In particular $T_w \in \calO_n$.
\item If $w, w' \in S_n$ and $w(i) \neq w'(i)$, then 
$T_w[\diag_i] \neq T_{w'}[\diag_i]$.  In particular $w \mapsto T_w$
is injective.
\item For each $T \in \calO_n$, consider the function
$w : \{1, \dots, n\} \to \{1, \dots, n\}$
such that $ w(i) \equiv T[\diag_i] \pmod n$, for $i =1, \dots, n$.  
We have $w \in S_n$, and $T_w = T$.  In particular $w \mapsto T_w$
is surjective.
\end{packedenumi}
\end{theorem}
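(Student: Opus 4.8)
The plan is to dispose of the three parts in the order (ii), (i), (iii): part (ii) is nearly immediate, part (i) is the crux, and part (iii) then follows formally once (i) and (ii) are in hand, using the count $|\calO_n| = n!$. The engine for everything is the \emph{modular invariant} of the construction. In Algorithm~\ref{alg:forward} the entry of box $\diag_i$ begins at $w(i)$ and is thereafter only ever increased by $n$ (each time a sliding path terminates at $\diag_i$), so $T_w^{\la+}[\diag_i] \equiv w(i) \pmod{n}$; dually, in Algorithm~\ref{alg:reverse} the entry begins at $w(i)+(m-1)n$ and is only decreased by $n$, so $T_w^{\Rect/\la-}[\diag_i] \equiv w(i) \pmod{n}$ as well. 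Since Theorem~\ref{thm:combine} guarantees that the two halves agree on the diagonal, $T_w[\diag_i] \equiv w(i) \pmod{n}$ for every $i$. Because $w(i), w'(i) \in \{1,\dots,n\}$, the hypothesis $w(i) \neq w'(i)$ of (ii) forces these residues to differ, hence $T_w[\diag_i] \neq T_{w'}[\diag_i]$; this proves (ii), and in particular injectivity of $w \mapsto T_w$.

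Part (i) is the main obstacle. At the level of the modular invariant the statement is already transparent: promotion decrements every entry by $1$, so each diagonal residue $w(i)$ is replaced by its image under the cyclic decrement $1 \mapsto n$, $j \mapsto j{-}1$, which is precisely the action of $c = (1\ n\ n{-}1\ \dots\ 2)$ on values and hence gives exactly the residues prescribed for $T_{wc}$. The difficulty is that promotion is \emph{not} a naive decrement: its rectification step relocates entries between boxes (in the worked example the entry at $\diag_3$ jumps from $12$ to $15$), so proving the full tableau identity $\promote T_w = T_{wc}$ requires controlling this rearrangement. My approach is to commute the promotion operation past the construction. Promotion is a composite of a deletion, a jeu de taquin slide, and a relabeling; classical confluence of slides together with Theorem~\ref{thm:welldefined}, which says $T_w$ is insensitive to the order in which slides are performed, should allow the promotion slide to be carried out on the initial diagonal \emph{seed} rather than on the finished tableau. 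The goal is to show that, performed at the level of the seed, promotion transforms the data ``$w(i)$ in box $\diag_i$'' into the corresponding seed for $wc$, after which completing Algorithms~\ref{alg:forward} and~\ref{alg:reverse} in any convenient order (and, if desired, transporting between diagonals via Theorem~\ref{thm:combine}) yields $T_{wc}$. Since $c$ has order $n$, iterating this gives $\promote^n T_w = T_w$, so that $T_w \in \calO_n$.

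The delicate point, and where I expect the real work to concentrate, is the local analysis where the promotion slide path crosses the diagonal. There one must verify that as the hole passes through a box $\diag_i$ the induced change of entry reproduces exactly the ``$\pm n$'' refilling rule that defines the two algorithms, and that the unique box whose residue equals $1$ --- namely $\diag_{w^{-1}(1)}$ --- is handled so that after relabeling its residue becomes $n$, in agreement with $c$. This is exactly the behaviour at the interface between the forward region $\la+$ and the reverse region $\Rect/\la-$, where the two halves of the construction meet, and it is the feature of the problem that cannot be read off from ordinary rectification; controlling it is what makes Theorem~\ref{thm:welldefined} (rather than classical confluence alone) indispensable here.

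Finally, part (iii) follows formally. By (i) the image of $w \mapsto T_w$ is contained in $\calO_n$, and by (ii) the map is injective; since $|S_n| = n! = |\calO_n|$ by Rhoades' cyclic sieving theorem, an injection between sets of equal finite cardinality is a bijection. Consequently every $T \in \calO_n$ equals $T_{w'}$ for a unique $w' \in S_n$, and the modular invariant gives $T[\diag_i] \equiv w'(i) \pmod{n}$. Hence the residue function $w$ defined in (iii) coincides with $w'$, so it is a genuine element of $S_n$ and satisfies $T_w = T_{w'} = T$, completing the proof.
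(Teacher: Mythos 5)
Your parts (ii) and the residue bookkeeping are fine and match the paper, but part (i) — which you correctly identify as the crux — is not proved in your proposal: it is a plan whose hardest step is explicitly deferred (``where I expect the real work to concentrate''), and the tools you propose for it are insufficient. You want classical confluence of slides plus Theorem~\ref{thm:welldefined} to let you ``commute the promotion slide past the construction'' down to the seed. But Theorem~\ref{thm:welldefined} only says that the slides \emph{internal to Algorithm~\ref{alg:forward}} (sliding boxes of $\la-$) can be reordered; the promotion slide is a slide of the corner of $\Rect$ through the \emph{completed} tableau $T_w$, crossing from $\Rect/\la-$ into $\la+$, and the intermediate states of Algorithm~\ref{alg:forward} are not ordinary \emph{jeu de taquin} tableaux (the paper stresses exactly this in Section~\ref{sec:remarks}: there is no known Knuth-class-type invariant of the algorithm), so ``classical confluence'' does not apply to it. The paper's actual proof of (i) has to work around this: it takes $\la+$ to be the staircase so the promotion path meets $\la+/\la-$ in exactly one box $\diag_s$; it chooses a run of Algorithm~\ref{alg:forward} whose first slide moves the entry $1$, observes that deleting $1$ and decrementing then turns the remainder of that run into a run computing $T_{wc}^{\la+}$, and concludes agreement of $\promote T_w$ with $T_{wc}$ on $\la+$ \emph{except possibly at} $\diag_s$, whose new entry is a priori unknown. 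Crucially, it then runs the mirror argument with $\promote^{-1}$ on $T_{wc}$ over $\Rect/\la-$, getting agreement except at one box $\diag_{s'}$, and uses the fact that $s$ (resp.\ $s'$) is the unique index with $T_{wc}[\diag_i]-T_w[\diag_i]\neq -1$ to force $s=s'$, whence the two sliding paths are inverse and $\promote T_w=T_{wc}$ everywhere. This bidirectional pinning of the crossing box is the missing idea in your sketch; the one-sided seed-level analysis you outline cannot determine the entry at $\diag_s$, precisely because the $+n$ refill rule of Algorithm~\ref{alg:forward} and the relocation of entries under promotion only match up after the $\Rect/\la-$ side is brought in.

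Your part (iii) is logically valid but takes a genuinely different route from the paper: you import $|\calO_n|=n!$ from Rhoades' cyclic sieving theorem and deduce surjectivity from injectivity by counting, then recover the residue description a posteriori. The paper instead proves (iii) directly and constructively: given $T\in\calO_n$ it analyzes the increments $\Delta_{ik}=\promote^k T[\diag_i]-\promote^{k-1}T[\diag_i]$, shows each row and column of $(\Delta_{ik})$ contains exactly one non-negative entry (which then equals $n-1$), and uses the reformulated Algorithm~\ref{alg:likepromotion} to exhibit the restriction of $T$ to $\la+$ (and dually to $\Rect/\la-$) as a run of the construction for the residue function $w$. What the paper's argument buys is independence from the cyclic sieving theorem — the bijection then \emph{reproves} $|\calO_n|=n!$ rather than consuming it — in line with the paper's stated aim of an effective, combinatorial description of $\calO_n$; your counting shortcut is correct given (i) and (ii), but (i) is where your proposal has a real gap, so as it stands the proof is incomplete.
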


The rest of this paper is organized as follows.  In 
Section~\ref{sec:strategy} we develop a reduction strategy for 
proving Theorems~\ref{thm:welldefined} and~\ref{thm:combine}.
This strategy is implemented in Section~\ref{sec:descents}, where we
prove two lemmas: the first reducing the problem to one we can solve, 
and the second solving it.  All three 
theorems 
are proved in Section~\ref{sec:proofs}.  Finally, in 
Section~\ref{sec:remarks} 
we discuss some additional facts that are true, and some that
we would like to be true.


\section{Strategy}
\label{sec:strategy}

To prove Theorem~\ref{thm:welldefined}, we need to formulate it
in a slightly different way.  As with ordinary rectification, each 
possible sequence of choices of boxes in Algorithm~\ref{alg:forward} 
can be encoded by a standard Young tableau $U \in \SYT(\la-)$,
by putting $U[\one] := |\la-|+1-k$ if $\one$ is the box chosen
in the $k$\nth iteration of the loop.  Let $T_w^U$ denote
the result of applying Algorithm~\ref{alg:forward} with
order of boxes encoded by $U$.  For example, Figure~\ref{fig:forward}
computes $T_{3142}^U$, with
\[
    U \ =\ %
\begin{young}[c]
1 & 3 & 6 & 7 \\
2 & 4 & 9 \\
5 & 8
\end{young}\,.
\]
Theorem~\ref{thm:welldefined}
states that $T_w^U$ is independent of $U \in \SYT(\la-)$.

\begin{figure}[tb]
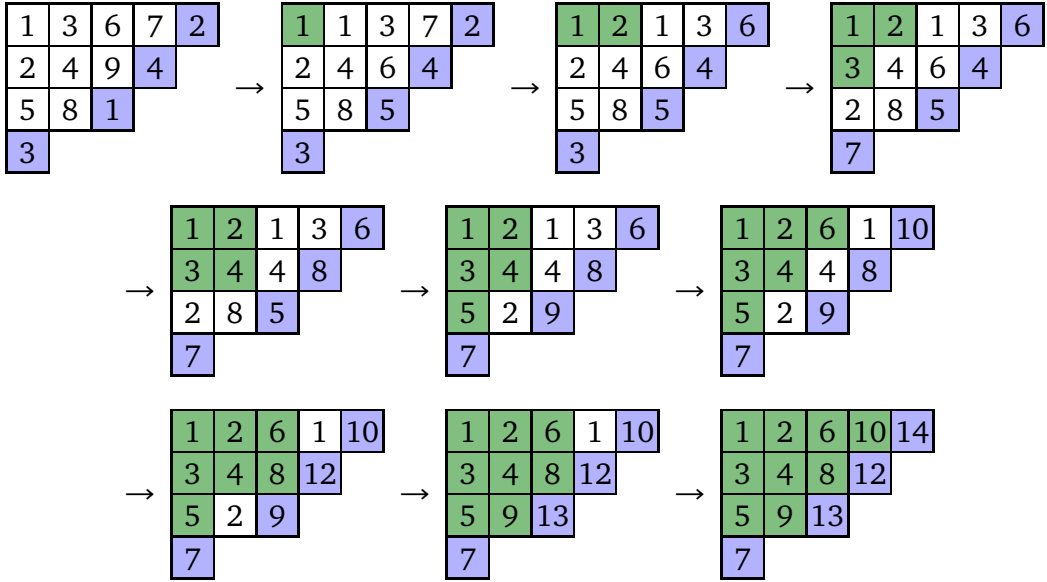

\begin{gather*}
{\begin{young}[c]
 1 & 3 & 6 & 7 &!2 \\
 2 & 4 & 9 &!4  \\
 5 & 8 &!1 \\
!3
\end{young}}
\ \to \ %
{\begin{young}[c]
?1  & 1 & 3 & 7 &!2 \\
 2 & 4 & 6 &!4  \\
 5 & 8 &!5 \\
!3
\end{young}}
\ \to \ %
{\begin{young}[c]
?1  & ?2 & 1 & 3 &!6 \\
 2 & 4 & 6 &!4  \\
 5 & 8 &!5 \\
!3
\end{young}}
\ \to \ %
{\begin{young}[c]
?1  & ?2 & 1 & 3 &!6 \\
?3 & 4 & 6 &!4  \\
 2 & 8 &!5 \\
!7
\end{young}}
\\[1.5ex]
\ \to \ %
{\begin{young}[c]
?1  & ?2 & 1 & 3 &!6 \\
?3 & ?4  & 4 & !8  \\
 2 & 8 &!5 \\
!7
\end{young}}
\ \to \ %
{\begin{young}[c]
?1  & ?2 & 1 & 3 &!6 \\
?3 & ?4  & 4 & !8  \\
?5 & 2 & !9 \\
!7 
\end{young}}
\ \to \ %
{\begin{young}[c]
?1  & ?2 & ?6 & 1 &!10 \\
?3 & ?4  & 4 & !8  \\
?5 & 2 & !9 \\
!7 
\end{young}}
\\[1.5ex]
\ \to \ %
{\begin{young}[c]
?1  & ?2 & ?6 & 1 &!10 \\
?3 & ?4  & ?8 & !12  \\
?5 & 2 & !9 \\
!7 
\end{young}}
\ \to \ %
{\begin{young}[c]
?1  & ?2 & ?6 & 1 &!10 \\
?3 & ?4  & ?8 & !12  \\
?5 & ?9 & !13 \\
!7 
\end{young}}
\ \to \ %
{\begin{young}[c]
?1  & ?2 & ?6 & ?10 &!14 \\
?3 & ?4  & ?8 & !12  \\
?5 & ?9 & !13 \\
!7 
\end{young}}
\end{gather*}
\caption{An alternative way to compute $T_{3142}^\la+$, 
using the 
same order for the boxes of $\la-$ as the example in Figure~\ref{fig:forward}.}
\label{fig:sequence}
\end{figure}

The steps of a rectification-type algorithm can be
performed in a variety of different but equivalent orders.  
In particular, instead
of sliding the entries of $U$ through $T$, from largest to smallest,
one can reverse-slide the entries of $T$ though $U$, from smallest to
largest, (see \cite{BSS} for full details).  
Figure~\ref{fig:sequence} illustrates this in the context 
of Algorithm~\ref{alg:forward}, using the example from
Figure~\ref{fig:forward}.

This perspective not only gives a reformulation of
Algorithm~\ref{alg:forward}, but allows us generalize it 
to inputs that are not permutations.
Let $\sigma = \sigma_1\sigma_2\sigma_3 \dots$ be an infinite 
sequence, with $\sigma_k \in \{1, \dots, n\}$ for $k =1, 2, 3, \dots$.
We construct a sequence $\boxseq_\sigma^U = \one_1\, \one_2\, \one_3\, \dots$
of boxes of $\la+$, as follows.

\begin{algorithm} 
\label{alg:sequence}
\emph{INPUT:} The pair $(\sigma, U)$.
\begin{algorithmic}
\State Let $U_0 := U$;
\For {$k = 1, 2, 3, \dots$}
\State Let $U_k$ be the tableau obtained by reverse-sliding 
    box $\diag_{\sigma_k}$ through $U_{k-1}$;
\State Define $\one_k$ to be the final position of the sliding path;
\State Delete the entry in $\diag_{\sigma_k}$ from $U_k$, if one exists;
\EndFor
\State \Return $\boxseq_\sigma^U := \one_1\, \one_2\, \one_3\, \dots.$
\end{algorithmic}
\end{algorithm}

We use this sequence to
define a function $\delta_\sigma^U : \{1, \dots, n\} \to \ZZ_{\geq 0}$,
\[
  \delta_\sigma^U(i) := 
  \#\{k  \mid \sigma_k = i \text{ and }\one_k \neq \diag_i\}\,,
\]
where $\boxseq_\sigma^U = \one_1\,\one_2\,\one_3\,\dots$.  This function
will be key in proving Theorem~\ref{thm:combine}.  If
$\boxseq_\sigma^U$ is independent of $U \in \SYT(\la-)$, we write
$\boxseq_\sigma^\la+ := \boxseq_\sigma^U$, and $\delta_\sigma^\la+ := 
\delta_\sigma^U$.

Strictly speaking, Algorithm~\ref{alg:sequence} is not a proper algorithm, 
in that it does not terminate;
however, we are really only interested in a finite part
of $\boxseq_\sigma^U$.  Once $k$ is sufficiently large, we have 
$\one_k = \diag_{\sigma_k}$.  For $N \geq 0$, denote the truncation
of a sequence at its $N$\nth
term by
$\trunc_N(a_1 a_2 a_3 \dots) := (a_1 a_2 \dots a_N)$.

For $w \in S_n$,  define $w^*$ to be the repeating sequence
\[
w^* := a_1a_2 \dots a_n\, a_1 a_2 \dots a_n\, a_1 a_2 \dots,
\]
where $a_1a_2 \dots a_n$ is the word representing $w^{-1}$ in
one line notation (i.e. $a_i = w^{-1}(i)$ for $i=1, \dots, n$).  
The following proposition precisely states 
the relationship between
Algorithms~\ref{alg:forward} and~\ref{alg:sequence}.

\begin{proposition}
\label{prop:reformulate}
Write $\boxseq_{w^*}^U = \one_1\, \one_2\, \one_3\, \dots.$
For each box $\one \in \la+$, $T_w^U[\one]$ is the smallest $k$ 
such that $\one_k = \one$\,.  Hence, for some $N \geq 0$,
$\trunc_N\big(\boxseq_{w^*}^U\big)$ determines $T_w^U$.
In addition, we have 
$T_w^U[\diag_i] = w(i) + n \cdot \delta_{w^*}^U(i)$.
\end{proposition}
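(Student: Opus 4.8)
The plan is to deduce all three assertions from the reverse-slide reformulation of Algorithm~\ref{alg:forward} set up in the discussion above, by identifying that reformulation with Algorithm~\ref{alg:sequence} run on $\sigma = w^*$. By the forward/reverse duality for rectification-type computations (see \cite{BSS}), $T_w^U$ can be built by reverse-sliding the entries of the tableau-under-construction through $U$ in increasing order of their values, rather than forward-sliding the cells of $U$ in decreasing order. Since $\sigma_k = i$ precisely when $k \equiv w(i) \pmod n$, the cell $\diag_i$ is reverse-slid at steps $k = w(i),\, w(i)+n,\, w(i)+2n, \dots$; I would show that $w^*$ causes the diagonal cells to be processed in increasing order of their current entries, so that Algorithm~\ref{alg:sequence} is exactly this reverse-slide computation.

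First I would match a single step of the two processes. Reverse-sliding $\diag_{\sigma_k}$ either terminates at $\diag_{\sigma_k}$ itself, in which case its entry is already in rectified position and does not move, or terminates at some $\one_k \neq \diag_{\sigma_k}$, in which case the entry settles permanently at $\one_k$ and, following Algorithm~\ref{alg:forward}, the diagonal cell is refilled with its previous value plus $n$. Figure~\ref{fig:sequence} illustrates exactly this, with settled entries marked and diagonal cells refilled.

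The crux is the invariant that the entry which step $k$ reverse-slides out of $\diag_{\sigma_k}$ has value exactly $k$. I would prove this by induction on $k$, simultaneously showing that the escapes at $\diag_i$ (the steps with $\one_k \neq \diag_i$) are exactly its first $\delta_{w^*}^U(i)$ processings. At the $(t{+}1)$\nth processing of $\diag_i$, namely step $k = w(i)+tn$, the entry present is $w(i)$ plus $n$ times the number of prior escapes; the inductive hypothesis makes this $w(i)+tn = k$ while $t \le \delta_{w^*}^U(i)$, after which the entry has settled. The step requiring the most care --- and the main obstacle --- is the permanence of settled entries: once a reverse slide of $\diag_i$ terminates at $\diag_i$, that entry is never again displaced, so no further escapes occur and every later reverse slide of $\diag_i$ is trivial. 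This is the monotonicity of reverse-slide rectification (entries settle in increasing order and are not subsequently moved), but verifying it in the present refilling set-up, and checking that a slide starting at one diagonal cell cannot prematurely terminate at another, is where the work lies.

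Granting the invariant, the three statements follow quickly. For the first, the entry settled at $\one_k$ equals $k$, a box receives its value the first time it occurs as some $\one_k$, and subsequent occurrences are trivial reslides that leave it unchanged; hence $T_w^U[\one] = \min\{k : \one_k = \one\}$. For the diagonal formula, the final entry of $\diag_i$ is the value present when it settles, namely $w(i) + n\,\delta_{w^*}^U(i)$, since it escapes exactly $\delta_{w^*}^U(i) = \#\{k : \sigma_k = i,\ \one_k \neq \diag_i\}$ times, each escape contributing $+n$. For the truncation statement, once every entry has settled --- which happens by step $N = \max_i\big(w(i) + n\,\delta_{w^*}^U(i)\big)$, after which every step is a trivial reslide with $\one_k = \diag_{\sigma_k}$ --- the prefix $\trunc_N\big(\boxseq_{w^*}^U\big)$ already records every box together with its value, and therefore determines $T_w^U$.
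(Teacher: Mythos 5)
Your proposal is correct and takes essentially the same route as the paper: the paper likewise treats the first two statements as a precise restatement of the forward/reverse slide duality cited from \cite{BSS} at the start of Section~\ref{sec:strategy}, and proves the diagonal formula by the same counting --- the terms with $\sigma_k = i$ occur at $k = w(i), w(i)+n, w(i)+2n, \dots$, and the $(\delta_{w^*}^U(i)+1)$\nth of these is the smallest $k$ with $\one_k = \diag_i$, giving $T_w^U[\diag_i] = w(i) + n\cdot\delta_{w^*}^U(i)$. Your inductive invariant (the entry slid at step $k$ equals $k$) and the permanence-of-settled-entries observation simply spell out details that the paper's brief proof delegates to the cited switching machinery.
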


\begin{proof}
The first two statements are simply a more precise formulation of the
remarks at the beginning of this section.  For the third, note
that the $k$\nth term of $w^*$ is equal to $i$, for 
$k = w(i), w(i)+n, w(i)+2n, \dots.$
By the definition of $\delta_\sigma^U$, the 
$(\delta_{w^*}^U+1)$\nth term in this sequence is the smallest
$k$ such that $\one_k = \diag_i$.  The former is 
$w(i) + n\cdot \delta_{w^*}^U(i)$, and the latter is $T_w^U[\diag_i]$.
\end{proof}

Partially order the boxes of $\la+$: let $\one \leq \one'$ if $\one'$
is both weakly right of $\one$ and weakly above $\one$.  

\begin{proposition}
\label{prop:descents}
Write $\boxseq_{\sigma}^U = \one_1\, \one_2\, \one_3\, \dots.$ 
If $\sigma_k < \sigma_{k+1}$ then $\one_k < \one_{k+1}$;
if $\sigma_k > \sigma_{k+1}$ then $\one_k > \one_{k+1}$.
\end{proposition}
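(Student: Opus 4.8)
The plan is to read the proposition as a reverse-slide incarnation of the classical row-bumping lemma, and to prove it by comparing two consecutive slide paths directly. By Algorithm~\ref{alg:sequence}, the term $\one_k$ is the endpoint of the reverse slide that begins by vacating the diagonal box $\diag_{\sigma_k}$ in $U_{k-1}$: the resulting hole moves one step up or one step left at a time (at each cell it is filled by the larger of its left and upper neighbours), so its path is monotone toward the north-west, and it settles at $\one_k$ when both its left and upper neighbours lie outside the filled region. Because the diagonal is a strict staircase, $\sigma_k<\sigma_{k+1}$ means $\diag_{\sigma_k}$ lies strictly south-west of $\diag_{\sigma_{k+1}}$, while $\sigma_k>\sigma_{k+1}$ means it lies strictly north-east; the two assertions to prove are precisely the corresponding strict comparisons $\one_k<\one_{k+1}$ and $\one_k>\one_{k+1}$ in the partial order on the boxes of $\la+$. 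Since the statement involves only the indices $k$ and $k+1$, I would first isolate the interaction of slide $k$ with slide $k+1$.

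The core is a non-crossing estimate for reverse-slide paths, which I would prove by induction along the paths using the local slide rule. For two holes released from comparable cells of a single tableau, one shows that their paths do not cross and that their endpoints inherit the strict comparison of their starting cells; the induction maintains, cell by cell, the invariant that the two partial paths stay strictly on the correct sides of each other, the base case being supplied by the strict row-and-column separation of $\diag_{\sigma_k}$ and $\diag_{\sigma_{k+1}}$. The rule ``fill with the larger of the left and upper neighbour'' is exactly what makes this invariant propagate from one cell of the path to the next, in the same way that the comparison of bumped entries propagates down the rows in the standard proof of the row-bumping lemma. (Alternatively, one could try to transport the whole statement to ordinary row insertion through the reverse-slide/insertion correspondence of~\cite{BSS} and then quote the classical lemma, but verifying that this correspondence is compatible with the diagonal bookkeeping of Algorithm~\ref{alg:sequence} is itself work, so I would keep the direct path argument as the primary route.)

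The main obstacle is that the two slides are \emph{not} performed on the same tableau: slide $k$ alters $U_{k-1}$ along its path, and the subsequent deletion of the entry in $\diag_{\sigma_k}$ leaves two empty cells, at $\one_k$ and at $\diag_{\sigma_k}$, before slide $k+1$ begins on $U_k$. Thus the clean same-tableau non-crossing argument does not apply verbatim, and the real content is to show that these alterations perturb the path of slide $k+1$ only in the favourable direction. I would handle this by carrying the comparison as a single invariant through the entire two-slide process, treating the hole configuration left by slide $k$ together with the vacated cell $\diag_{\sigma_k}$ as the environment in which slide $k+1$ runs, and then checking both that the invariant survives the deletion step and that the north-west–moving path of slide $k+1$ is blocked by the already-empty region $\{\one_k,\diag_{\sigma_k}\}$ exactly so as to force $\one_{k+1}$ onto the required side of $\one_k$. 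Verifying that this blocking is present for the correct inequality and is strict—here one uses that $\diag_{\sigma_k}$ and $\diag_{\sigma_{k+1}}$ differ in both coordinates—is the delicate heart of the argument; the remaining bookkeeping is routine.
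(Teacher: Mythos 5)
Your route---a direct comparison of the two consecutive reverse-slide paths---is in principle the classical way to prove such statements, but the proposal does not contain the proof: the step you defer as ``the delicate heart'' \emph{is} the proposition, and the mechanism you name for it is not the one that works. The vacated cell $\diag_{\sigma_k}$ plays no role at all: the hole of slide $k{+}1$ moves only north and west from $\diag_{\sigma_{k+1}}$, and since the diagonal boxes are strictly separated in both coordinates, $\diag_{\sigma_k}$ lies in a strictly larger row (case $\sigma_k<\sigma_{k+1}$) or a strictly larger column (case $\sigma_k>\sigma_{k+1}$) than any cell the second hole, or either of the two neighbours it consults, can ever occupy; so ``blocking'' by $\diag_{\sigma_k}$ is vacuous. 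Nor can the emptiness of $\one_k$ alone force the conclusion: a single vacated inner corner does not obstruct a north-west-monotone path, and with adversarial entries the second hole could pass it on the wrong side. What actually controls the second path is that slide $k$ strictly \emph{decreases} the entry in every cell of its path that remains filled (each such cell inherits the entry of its north-west neighbour on the path) and vacates the last cell; these diminished entries enter the larger-of-left-and-up comparisons and are what deflect the second hole to the correct side of the first path in both cases. The invariant you promise to ``carry through the two-slide process'' must be formulated in terms of these cell-by-cell entry comparisons, and the proposal never states it, let alone propagates it through the deletion step. As written, the argument would not close.

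For comparison, the paper disposes of the statement in one line by a reduction that dissolves exactly the ``two slides on different tableaux'' difficulty you wrestle with: encode the two successive holes as a pair of entries in the boxes $\diag_{\sigma_k}$, $\diag_{\sigma_{k+1}}$ forming a horizontal strip when $\sigma_k<\sigma_{k+1}$ and a vertical strip when $\sigma_k>\sigma_{k+1}$ (possible precisely because the diagonal boxes differ strictly in both coordinates), so that the two sequential slides become a single \emph{jeu de taquin} computation on one tableau; the standard fact that \emph{jeu de taquin} preserves horizontal and vertical strips then yields $\one_k<\one_{k+1}$, respectively $\one_k>\one_{k+1}$, with no path analysis at all. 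Ironically, the fallback you set aside---transporting the statement to a known preservation lemma via the switching formalism of \cite{BSS}---is essentially the paper's proof; carried out at the level of strips rather than row insertion, the ``diagonal bookkeeping'' you worried about reduces to the observation above that the later slide never revisits the region where the deletion at $\diag_{\sigma_k}$ occurs.
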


\begin{proof}
This follows from the fact that the \emph{jeu de taquin} 
preserves horizontal
(and vertical) strips.
\end{proof}

For a sequence
$a_1 a_2 a_3 \dots $ with terms from a partially ordered set
(e.g. the numbers $\{1, \dots, n\}$ or the boxes of $\lambda_+$), 
define 
the \defn{strict Knuth transformations} to be the operations
\[
\kappa_k(a_1 a_2 a_3 \dots ) 
= a_1 a_2 \dots a_{k-1} x y z a_{k+3} a_{k+4} \dots,
\]
where
\[
    xyz = 
\begin{cases} 
a_k a_{k+2} a_{k+1}
&\quad 
\text{if\ \ $a_{k+1} < a_k < a_{k+2}$ 
\ \ or\ \ $a_{k+2} < a_k < a_{k+1}$}
\\
a_{k+1} a_k a_{k+2} 
&\quad 
\text{if\ \ $a_k < a_{k+2} < a_{k+1}$ 
\ \ or\ \  $a_{k+1} < a_{k+2} < a_k$}
\\
\text{undefined}
&\quad 
\text{otherwise.}
\end{cases}
\]
In the third case $\kappa_k(a_1a_2a_3\dots)$ is also undefined.
These are similar to elementary Knuth transformations on sequences, 
except that the inequalities are required to be strict.
We define two sequences $a_1 a_2 a_3 \dots$ and $b_1 b_2 b_3 \dots$
to be \defn{equivalent} if for every $N \geq 0$, there exists a 
finite sequence
$\kappa_{k_1}, \kappa_{k_2}, \dots, \kappa_{k_m}$ of strict 
Knuth transformations such that
\[
\trunc_N \big(\kappa_{k_1} \circ \kappa_{k_2} \circ \dots \circ \kappa_{k_m}
(a_1 a_2 a_3 \dots)\big) = \trunc_N \big(b_1 b_2 b_3 \dots\big)
\,.
\]
When $a_1a_2a_3\dots$ is a sequence of boxes, this 
generalizes of the notion of dual equivalence on tableaux \cite{Hai}.

\begin{proposition}
\label{prop:equivariant}
Let  $\sigma$ be a sequence with terms from $\{1, \dots, n\}$. 
If $\kappa_k(\sigma)$ is defined, then
$\kappa_k(\boxseq_\sigma^U)$ is defined, and
\begin{packedenumi}
\item $\boxseq_{\kappa_k(\sigma)}^U = \kappa_k(\boxseq_\sigma^U)$;
\item $\delta_{\kappa_k(\sigma)}^U(i) = \delta_\sigma^U(i)$, 
for $i =1 ,\dots, n$.
\end{packedenumi}
\end{proposition}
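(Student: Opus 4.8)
The plan is to use the sequential structure of Algorithm~\ref{alg:sequence} to reduce the claim to a purely local assertion about three consecutive reverse slides, and then to prove that assertion by analyzing how the slide of the \emph{median} term of the Knuth triple separates the slides of the two \emph{extreme} terms.

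First I would localize. Set $\sigma' := \kappa_k(\sigma)$, and write $U'_j$, $\one'_j$ for the data of the $\sigma'$ run. The two sequences agree outside positions $k, k+1, k+2$, and in Algorithm~\ref{alg:sequence} the tableau $U_j$ and the box $\one_j$ depend only on $U$ and on $\sigma_1 \cdots \sigma_j$; hence the runs for $\sigma$ and $\sigma'$ produce identical $U_0, \dots, U_{k-1}$ and identical $\one_1, \dots, \one_{k-1}$. It therefore suffices to start from the common tableau $U_{k-1}$, process the three terms of the Knuth triple, and establish: (a) the resulting tableaux agree, $U_{k+2} = U'_{k+2}$; and (b) the output boxes are related by the \emph{same} positional transposition $\pi$ that $\kappa_k$ applies to the values, that is, $\one'_k \one'_{k+1} \one'_{k+2} = \pi(\one_k \one_{k+1} \one_{k+2})$. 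Granting (a), the equality $\sigma_j = \sigma'_j$ for $j \ge k+3$ propagates to $U_j = U'_j$ and $\one_j = \one'_j$ for all $j \ge k+3$, so $\boxseq_{\sigma'}^U$ and $\boxseq_\sigma^U$ differ only by $\pi$ in the three central positions.

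With (a) and (b), part (ii) is immediate: $\pi$ simultaneously permutes the values $\sigma_k\sigma_{k+1}\sigma_{k+2}$ and the boxes $\one_k\one_{k+1}\one_{k+2}$, hence permutes the pairs $(\sigma_j, \one_j)$ among themselves and fixes all others, so the count defining $\delta_\sigma^U(i)$ is unchanged. Part (i) requires in addition that this $\pi$ is exactly the transposition by which the strict Knuth transformation acts on the box sequence. In each of the two defined cases the median value sits at an end of the triple, so $\sigma_k\sigma_{k+1}\sigma_{k+2}$ is a strict valley or a strict peak at position $k+1$; by Proposition~\ref{prop:descents} the consecutive comparisons $\one_k$ vs.\ $\one_{k+1}$ and $\one_{k+1}$ vs.\ $\one_{k+2}$ match those of $\sigma$, so $\one_{k+1}$ is the strict minimum (resp.\ maximum) of the box triple. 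To pin down $\kappa_k$ on the boxes I would then prove the remaining, \emph{non-consecutive} comparison: that $\one_k$ and $\one_{k+2}$ are comparable in the box poset and ordered exactly as $\sigma_k$ and $\sigma_{k+2}$. This places the box-median at the same end as the value-median, so that $\kappa_k(\boxseq_\sigma^U)$ is defined and equals $\pi(\boxseq_\sigma^U) = \boxseq_{\sigma'}^U$.

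The main obstacle is the core local claim: the state agreement (a), the box transposition (b), and the non-consecutive comparison above. All of these reduce to showing that the reverse slides of the two extreme terms \emph{commute}, in the strong sense that each one's landing box and the final filling are independent of their relative order, while the median slide (held fixed at one end) sets up the geometry that makes this possible. This commutation cannot be quoted as a general fact about reverse slides into distinct diagonal boxes: a direct two-slide computation shows it \emph{fails} in general --- as it must, since a monotone triple admits no Knuth move --- because the two slide paths can compete for cells in their shared upper-left region. The essential point is therefore to show that the median slide resolves this competition, for instance by tracking, via Proposition~\ref{prop:descents} and the strip-preservation property of jeu de taquin used in its proof, how the median slide displaces the relevant horizontal and vertical strips so that the two extreme paths become order-independent. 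I would carry this out by a careful analysis of the three reverse-slide paths in each Knuth case, or alternatively by encoding the two extreme slides as a tableau-switching move in the sense of~\cite{BSS} and invoking its involutivity to obtain the commutation and the box correspondence at once.
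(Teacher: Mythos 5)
Your reduction --- localizing to the three slides out of the common tableau $U_{k-1}$, noting that it suffices to prove (a) agreement of the tableaux after the three slides and (b) that the landing boxes undergo the same positional transposition as $\kappa_k$, and observing via Proposition~\ref{prop:descents} that only the non-consecutive comparison of $\one_k$ with $\one_{k+2}$ is then missing --- matches the setup of the paper's proof exactly, and your derivation of (ii) from (a) and (b) is fine. But the proposal stops precisely where the proposition has its content: (a), (b), and the missing comparison are flagged as ``the core local claim,'' with two strategies sketched and neither executed. Worse, the mechanism you propose is untenable in one of the two Knuth cases. When $\kappa_k$ swaps the first two terms (i.e.\ $\sigma_k < \sigma_{k+2} < \sigma_{k+1}$ or $\sigma_{k+1} < \sigma_{k+2} < \sigma_k$), the median value sits in position $k+2$, so the median slide is performed \emph{last}: the two extreme slides are carried out first, in the two different orders, and in general they do \emph{not} commute --- the two intermediate tableaux after only two slides genuinely differ (the two-entry configurations placing $1$ and $2$ in the two extreme diagonal boxes in either order are not dual equivalent), and agreement holds only for the composite of all three slides. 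So no analysis aimed at making ``the two extreme paths order-independent'' can succeed there, and the median slide cannot ``set up the geometry'' for slides that precede it; the statement that must be proved concerns the three-slide composite as a whole. For the same reason, the involutivity of switching~\cite{BSS} by itself does not deliver the needed invariance.

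The paper closes exactly this gap, in a few lines, with Haiman's dual equivalence machinery, which you never invoke: the triple of reverse slides in Algorithm~\ref{alg:sequence} is encoded by the miniature standard tableau $T(\diag_{\sigma_k}, \diag_{\sigma_{k+1}}, \diag_{\sigma_{k+2}})$ with entries $1,2,3$; since the diagonal boxes are totally ordered and disconnected, a strict Knuth transformation of $\sigma$ corresponds to an elementary dual equivalence of this miniature, so the two miniatures form a dual equivalence class. Then \cite[Corollary~2.8]{Hai} yields $U_{k+2} = \HU_{k+2}$ (your (a)), and \cite[Lemma~2.3]{Hai} yields that the landing miniatures $T(\one_k, \one_{k+1}, \one_{k+2})$ and $T(\Hone_k, \Hone_{k+1}, \Hone_{k+2})$ are again dual equivalent --- which simultaneously gives your (b), the comparability of $\one_k$ and $\one_{k+2}$, and the fact that the box triple is moved by the \emph{same} strict Knuth transformation $\kappa_k$, so that $\kappa_k(\boxseq_\sigma^U)$ is defined and equals $\boxseq_{\kappa_k(\sigma)}^U$. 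If you wish to avoid citing Haiman, you must in effect reprove these two results by hand; that case analysis is the real work of the proposition, and it is absent from your proposal.
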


\begin{proof}
Write $\Hsigma := \kappa_k(\sigma)$.
Let $\boxseq_\sigma^U = \one_1\, \one_2\, \one_3\, \dots$,
and let $U_0, U_1, U_2, \dots$ be the sequence of tableaux produced
in Algorithm~\ref{alg:sequence}.
Let $\boxseq_{\Hsigma}^U 
 = \Hone_1\, \Hone_2\, \Hone_3\, \dots$,
and $\HU_0, \HU_1, \HU_2, \dots$ be the corresponding objects
for $\Hsigma$.
Since $\sigma_j = \Hsigma_j$ for $j < k$,
we have $\one_j = \Hone_j$ and $U_j = \HU_j$, for $j < k$.
In particular $U_{k-1} = \HU_{k-1}$.
Given $3$ boxes $\one, \one', \one''$ let $T(\one, \one', \one'')$
denote the standard Young tableau with entries $1$, $2$, and $3$,
in boxes $\one$, $\one'$ and $\one''$ respectively.
The pair of tableaux
$T(\diag_{\sigma_k}, \diag_{\sigma_{k+1}}, \diag_{\sigma_{k+2}})$
and
$T(\diag_{\Hsigma_k}, \diag_{\Hsigma_{k+1}}, \diag_{\Hsigma_{k+2}})$
form a dual equivalence class. 
It follows from \cite[Corollary 2.8]{Hai} that $U_{k+2} = \HU_{k+2}$,
and since $\sigma_j = \Hsigma_j$ for $j > k+2$ we have
$\one_j = \Hone_j$ for $j > k+2$.
By \cite[Lemma 2.3]{Hai}, $T(\one_k, \one_{k+1}, \one_{k+2})$
and $T(\Hone_k, \Hone_{k+1}, \Hone_{k+2})$
also form a dual equivalence class, which implies that the three-term
sequences
$(\one_k, \one_{k+1}, \one_{k+2})$ 
and $(\Hone_k, \Hone_{k+1}, \Hone_{k+2})$
are related by a strict Knuth transformation.  This proves (i),
and (ii) is straightforward.
\end{proof} 

This leads to our strategy for proving 
Theorems~\ref{thm:welldefined} and~\ref{thm:combine}, which
is outlined in the next proposition.
For the second statement in 
Theorem~\ref{thm:combine} we will need to consider 
how the constructions in Algorithms~\ref{alg:forward} 
and~\ref{alg:reverse} are related for different choices of diagonal.
This is facilitated by the following definition.
Let $\primela+/\primela-$ be another diagonal of $\Rect$\,.
Let $\one_1\,\one_2\,\one_3\,\dots$ be a sequence of boxes of $\la+$,
and let $\one'_1\,\one'_2\,\one'_3\,\dots$ be a sequence of boxes of 
$\primela+$.  We say that these sequences are \defn{compatible} if
$\one'_k = \one_k$ whenever $\one'_k \in \la+$ and $\one_k \in \primela+$.

\begin{proposition}
\label{prop:strategy}
Suppose that $\sigma$ is equivalent to $w^*$, and
$\boxseq_\sigma^\la+$ is well-defined
(i.e. $\boxseq_\sigma^U$ is independent $U \in \SYT(\la-)$).  
Then the following are true.
\begin{packedenumi}
\item  
$T_w^\la+$ is well-defined (i.e.
$T_w^U$ is independent of $U \in \SYT(\la-)$).
\item 
$T_w^\la+[\diag_i] = w(i) + n \cdot \delta_\sigma^\la+(i)$.
\item 
Suppose $\primela+$ is obtained from $\la+$ by adding one box.
If
$\boxseq_\sigma^\primela+$ is well-defined and compatible with 
$\boxseq_\sigma^\la+$, then the tableaux
$T_w^\la+$ and  $T_w^{\la+'}$ coincide on $\la+$.
\end{packedenumi}
\end{proposition}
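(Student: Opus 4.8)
The plan is to reduce all three statements to properties of the box sequences $\boxseq_{w^*}^U$ via Proposition~\ref{prop:reformulate}, and then to exploit the equivariance of $U \mapsto \boxseq_\sigma^U$ under strict Knuth transformations (Proposition~\ref{prop:equivariant}). Throughout I will use the following \emph{causality} observation, immediate from Algorithm~\ref{alg:sequence}: the box $\one_k$ and the tableau $U_k$ depend only on $U$ and the prefix $\sigma_1 \dots \sigma_k$, so $\trunc_N(\boxseq_\sigma^U)$ depends only on $U$ and $\trunc_N(\sigma)$.

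For (i), by Proposition~\ref{prop:reformulate} it suffices to show that $\boxseq_{w^*}^U$ is independent of $U$. Fix $N$. Since $\sigma$ is equivalent to $w^*$, choose strict Knuth transformations $\kappa_{k_1},\dots,\kappa_{k_m}$, depending only on $\sigma$ and $N$, with $\trunc_N(\kappa_{k_1}\circ\dots\circ\kappa_{k_m}(\sigma)) = \trunc_N(w^*)$. Applying Proposition~\ref{prop:equivariant}(i) at each step gives $\boxseq_{\kappa_{k_1}\circ\dots\circ\kappa_{k_m}(\sigma)}^U = \kappa_{k_1}\circ\dots\circ\kappa_{k_m}(\boxseq_\sigma^U)$. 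The right-hand side is a fixed operation applied to $\boxseq_\sigma^U = \boxseq_\sigma^{\la+}$, hence independent of $U$; and by causality its $N$-truncation equals $\trunc_N(\boxseq_{w^*}^U)$. Thus $\trunc_N(\boxseq_{w^*}^U)$ is independent of $U$ for every $N$, proving (i). Statement (ii) then follows from the formula $T_w^U[\diag_i] = w(i)+n\cdot\delta_{w^*}^U(i)$ of Proposition~\ref{prop:reformulate}, once we show $\delta_{w^*}^{\la+}(i)=\delta_\sigma^{\la+}(i)$.

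For the latter, recall from the remark following Algorithm~\ref{alg:sequence} that $\one_k=\diag_{\sigma_k}$ for all large $k$, so $\delta_\sigma^U(i)$ is finite and is computed from any sufficiently long truncation of $\boxseq_\sigma^U$. By Proposition~\ref{prop:equivariant}(ii), $\delta$ is invariant under strict Knuth transformations, and the equivalence relation is symmetric (each $\kappa_k$ is an involution on its domain). I will therefore argue by a two-sided squeeze: transforming $\sigma$ to agree with $w^*$ on a prefix long enough to capture all of $\delta_{w^*}$, invariance and causality give $\delta_\sigma^U(i)\ge\delta_{w^*}^U(i)$; transforming $w^*$ to agree with $\sigma$ on a prefix capturing all of $\delta_\sigma$ gives the reverse inequality. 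Hence $\delta_\sigma^{\la+}(i)=\delta_{w^*}^{\la+}(i)$, proving (ii).

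For (iii), write $\boxseq_{w^*}^{\la+}=\one_1\one_2\cdots$, $\boxseq_{w^*}^{\primela+}=\one'_1\one'_2\cdots$, and let $\beta$ be the unique box of $\primela+\setminus\la+$. As above, the equivalence together with Proposition~\ref{prop:equivariant}(i) transfers the hypotheses to $w^*$: the sequence $\boxseq_{w^*}^{\primela+}$ is well-defined and compatible with $\boxseq_{w^*}^{\la+}$; checking that compatibility survives a single Knuth move is a routine but necessary verification. By Proposition~\ref{prop:reformulate}, for $\one\in\la+$ the entries $T_w^{\la+}[\one]$ and $T_w^{\primela+}[\one]$ are the first times these sequences equal $\one$. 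Compatibility gives $\one'_k=\one\Rightarrow\one_k=\one$ (since $\one\in\la+$), whence $\{k:\one'_k=\one\}\subseteq\{k:\one_k=\one\}$ and therefore $T_w^{\primela+}[\one]\ge T_w^{\la+}[\one]$. For the reverse inequality it suffices to show that at each first-occurrence position $k_0=T_w^{\la+}[\one]$ one has $\one'_{k_0}\ne\beta$, for then compatibility forces $\one'_{k_0}=\one$ and $T_w^{\primela+}[\one]\le k_0$. Equivalently, I must show that whenever the larger sequence lands on the new box ($\one'_p=\beta$), the smaller sequence lands on a box it has already visited ($\one_p\in\{\one_1,\dots,\one_{p-1}\}$), so that no $\beta$-position is a first-occurrence position for $\boxseq_{w^*}^{\la+}$. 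This last claim is the main obstacle: it is a statement about a single reverse-slide step performed in two tableaux that differ only by the box $\beta$, and I expect to establish it using the strip-preservation and monotonicity of jeu de taquin underlying Proposition~\ref{prop:descents}. Granting it, the two inequalities combine to give $T_w^{\la+}[\one]=T_w^{\primela+}[\one]$ for all $\one\in\la+$, which is (iii).
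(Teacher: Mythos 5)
Your treatment of (i) is essentially the paper's own proof: reduce to the box sequences via Proposition~\ref{prop:reformulate}, transfer $\trunc_N(\boxseq_\sigma^U)=\trunc_N(\boxseq_\sigma^{\HU})$ to $\trunc_N(\boxseq_{w^*}^U)=\trunc_N(\boxseq_{w^*}^{\HU})$ using Proposition~\ref{prop:equivariant}(i), with the (correct) causality remark that $N$-truncations of box sequences depend only on $N$-truncations of $\sigma$. For (ii) the paper just says ``similarly''; your two-sided squeeze, using that each $\kappa_k$ is an involution on its domain and that $\delta$ is computed from a long enough truncation, is a legitimate and slightly more careful way to fill in that word. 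So far, same approach, correctly executed.

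The problem is (iii), and you have named it yourself: you prove only $T_w^{\primela+}[\one]\geq T_w^{\la+}[\one]$ from compatibility, and the reverse inequality rests on the claim that no first-occurrence position of $\boxseq_{w^*}^{\la+}$ is a $\beta$-hit of $\boxseq_{w^*}^{\primela+}$ --- which you do not prove. This is a genuine gap: bare compatibility of two abstract box sequences does \emph{not} imply equality of first-occurrence times (the pair $\one_1\one_2=\one\,\one$, $\one'_1\one'_2=\beta\,\one$ is compatible but has $f(\one)=1<2=g(\one)$), so some structural input about algorithm-generated sequences is unavoidable. Moreover, your proposed route for closing it --- ``a single reverse-slide step performed in two tableaux that differ only by the box $\beta$'' --- mischaracterizes the situation: the auxiliary tableaux $U_{k-1}\in$ (evolved from $\SYT(\la-)$) and $U'_{k-1}$ (evolved from $\SYT(\primela-)$) differ in both shape and content, not by a single box, so a local slide comparison does not obviously get off the ground. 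For what it is worth, the paper's own proof of (iii) is terse at exactly this point (``the result follows by Proposition~\ref{prop:reformulate}''); in the paper's overall logic the substantive verification is carried by Lemma~\ref{lem:welldefined}, whose explicit column description of $\boxseq_\sigma^{\la+}$ for the descent sequence shows that a $\beta$-hit of the $\primela+$-sequence occurs precisely when the $\la+$-sequence repeats the diagonal box $\diag_{i_0}$ in the column of $\beta$. A clean way to repair your argument in the same spirit as your (i) is to transfer a \emph{stronger} invariant than compatibility along the strict Knuth transformations: namely that $\one_k=\rho(\one'_k)$ for all $k$, where $\rho:\primela+\to\la+$ sends $\beta\mapsto\diag_{i_0}$ and fixes all other boxes, together with the fact that $\diag_{i_0}$ first occurs strictly before $\beta$; both are preserved by $\kappa_k$ (by the same dual-equivalence argument as Proposition~\ref{prop:equivariant}) and both are checkable for the explicit sequence of Lemma~\ref{lem:welldefined}. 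With that strengthened invariant, first occurrences on $\la+$ align and (iii) follows; without it, your proposal stops one lemma short of a proof.
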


\begin{proof}
Let $U, \HU \in \SYT(\Rect)$. To prove (i), we 
must show that $T_w^U = T_w^\HU$.
By Proposition~\ref{prop:reformulate},
there exists $N \geq 0$ so that $T_w^U$ and $T_w^\HU$ are determined by
$\trunc_N(\boxseq_{w^*}^U)$ and $\trunc_N(\boxseq_{w^*}^\HU)$; 
therefore it is enough to show that the latter two are equal.
Since $w^*$ is equivalent to $\sigma$ there is a sequence
$\kappa_{k_1}, \kappa_{k_2}, \dots, \kappa_{k_M}$ of strict 
Knuth transformations such that
\[
\trunc_N \big(\kappa_{k_1} \circ \kappa_{k_2} \circ \dots \circ \kappa_{k_M}
(\sigma)\big) = \trunc_N \big(w^*\big)
\]
Since $\boxseq_\sigma^\la+ =\boxseq_\sigma^U =\boxseq_\sigma^\HU$,
by Proposition~\ref{prop:equivariant}(i) we have
\[
\trunc_N\big(\boxseq_{w^*}^U\big)
=
\trunc_N \big(\kappa_{k_1} \circ \kappa_{k_2} \circ \dots \circ \kappa_{k_M}
(\boxseq_\sigma^\la+)\big) 
= 
\trunc_N\big(\boxseq_{w^*}^\HU\big)\,,
\]
as required.
Similarly, (ii) follows from
Proposition~\ref{prop:reformulate} and
Proposition~\ref{prop:equivariant}(ii).
For (iii), it is easy to see that $\boxseq_\sigma^{\la+}$
is compatible with $\boxseq_\sigma^{\primela+}$ if and only if 
that $\kappa_k(\boxseq_{\sigma}^{\la+})$ is compatible with
$\kappa_k(\boxseq_{\sigma}^{\primela+})$.  By 
Proposition~\ref{prop:equivariant}(i), 
$\boxseq_{w^*}^\la+$ is compatible with
$\boxseq_{w^*}^\primela+$; the result follows by
Proposition~\ref{prop:reformulate}.
\end{proof}

In the next section we will construct a suitable $\sigma$ for each 
permutation $w$, enabling us to prove Theorems~\ref{thm:welldefined}
and~\ref{thm:combine}.  The construction of $\sigma$ is based on
the \emph{cyclage} operation of Lascoux and Sch\"utzenberger \cite{LS}.
The following two facts will be used to establish equivalence:

\begin{proposition}
\label{prop:periodicequivalence}
Let $w, \Hw \in S_n$ be two permutations.  
If $w^{-1}$ and $\Hw^{-1}$ have the same insertion tableau,
then $w^*$ is equivalent to $\Hw^*$.
\end{proposition}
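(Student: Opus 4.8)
The plan is to reduce the statement to the classical theory of Knuth equivalence and then lift it to the periodic setting one period at a time. Write $a_1 a_2 \dots a_n$ for the one-line word of $w^{-1}$ (so $a_i = w^{-1}(i)$) and $b_1 b_2 \dots b_n$ for the one-line word of $\Hw^{-1}$, so that $w^*$ and $\Hw^*$ are the infinite sequences obtained by indefinitely repeating these two words. The goal is to exhibit, for each $N$, a finite sequence of strict Knuth transformations matching $\trunc_N(w^*)$ with $\trunc_N(\Hw^*)$, and the whole difficulty is to produce such transformations from the single-period data supplied by the hypothesis.

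First I would invoke the classical theorem characterizing insertion tableaux: two words have the same insertion tableau if and only if they are connected by a sequence of elementary Knuth transformations. Since $w^{-1}$ and $\Hw^{-1}$ are assumed to have the same insertion tableau, the words $a_1 \dots a_n$ and $b_1 \dots b_n$ are Knuth equivalent. Being one-line words of permutations, they have pairwise distinct entries, so every triple of consecutive entries met along the way is \emph{strictly} ordered. A direct inspection of the four cases in the definition of $\kappa_k$ shows that the strict Knuth transformations of this paper are exactly the elementary Knuth relations restricted to strictly ordered triples; hence, on permutation words, Knuth equivalence and strict Knuth equivalence coincide. Consequently there is a finite chain $\kappa_{k_1}, \dots, \kappa_{k_\ell}$, with each $k_j \in \{1, \dots, n-2\}$, carrying $a_1 \dots a_n$ to $b_1 \dots b_n$.

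Next I would lift this chain to the periodic sequence $w^*$. Fix $N \geq 0$ and choose $P$ with $Pn \geq N$. For each period $p = 1, \dots, P$, apply the index-shifted chain $\kappa_{k_1 + (p-1)n}, \dots, \kappa_{k_\ell + (p-1)n}$ to the infinite sequence. Because $k_j + 2 \leq n$, each such move acts on the three positions $(p-1)n + k_j,\ (p-1)n + k_j + 1,\ (p-1)n + k_j + 2$, which all lie inside the $p$\nth period; the entries it sees are therefore the distinct letters $a_{k_j}, a_{k_j+1}, a_{k_j+2}$ of that period, so it is a legitimate strict Knuth transformation and behaves exactly as it does on the isolated word. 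Applying all $P$ shifted chains transforms the first $Pn$ entries of $w^*$ into the first $Pn$ entries of $\Hw^*$ while leaving all later entries untouched. Truncating at $N \leq Pn$ then yields $\trunc_N(\Hw^*)$, which is precisely the equality required; since $N$ was arbitrary, $w^*$ is equivalent to $\Hw^*$.

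The one point demanding care — and the only genuine obstacle — is the interaction between the repeated letters of the periodic sequence and the strictness of the transformations. A priori one might worry that connecting the two period-words forces a Knuth move straddling a period boundary, where the three relevant letters (ending one period and beginning the next) need not be distinct, so that no strict $\kappa_k$ would apply. The resolution is that Knuth equivalence of length-$n$ words uses only moves at the interior positions $1, \dots, n-2$; thus every move in the chain stays strictly within a single period, where the letters form a permutation and are automatically distinct. This is exactly why the reduction to the single-period statement is legitimate, and why strictness never obstructs the argument.
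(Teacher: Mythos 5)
Your proof is correct and follows essentially the same route as the paper's: invoke the classical fact that words with a common insertion tableau are Knuth equivalent, observe that distinctness of the letters of a permutation word makes every move a \emph{strict} Knuth transformation, and then apply the resulting chain period by period to turn the first $P$ periods of $w^*$ into those of $\Hw^*$ before truncating at $N \leq Pn$. Your explicit verification that each shifted move stays inside a single period (since the moves act at interior positions $1, \dots, n-2$) is a point the paper leaves implicit, but it is the same argument.
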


\begin{proof}
Fix $N \geq 0$.
Let $a_1a_2 \dots a_n$ and $b_1b_2 \dots b_n$ be the words representing
$w^{-1}$ and $\Hw^{-1}$ respectively.  Since these words have the same 
insertion tableau, they are related by a finite sequence of elementary 
Knuth transformations, and since $a_i \neq a_j$ for $i \neq j$, each of
these is a strict Knuth transformation.  It follows that
$w^*$ can be transformed into any sequence of the form
\[
   b_1b_2 \dots b_n\,b_1b_2 \dots b_n\ \cdots\ b_1b_2 \dots b_n\,
   a_1a_2 \dots a_n\,a_1a_2 \dots a_n\,a_1a_2\ \dots
\]
using a finite sequence of strict Knuth transformations.
If there are at least $N/n$ copies of $b_1b_2\dots b_n$, then 
truncating
at the $N$\nth term gives $\trunc_N(\Hw^*)$, as required.
\end{proof}

\begin{proposition}
\label{prop:rowstrict}
Let $a_1a_2a_3 \dots$ and $b_1b_2b_3 \dots$ be 
positive integer sequences.
Let $A_k$ be the insertion tableau of the finite word
$a_1a_2 \dots a_k$, and let $B_k$ be the insertion tableau of
$b_1b_2 \dots b_k$.
Suppose there exists a number $M$ such that the following hold: 
$A_M = B_M$;
$a_k = b_k$ for all $k > M$; and $A_k$ and $B_k$ are row-strict 
for all
$k \leq M$.  Then $a_1a_2a_3 \dots$ is equivalent to $b_1b_2b_3 \dots.$
\end{proposition}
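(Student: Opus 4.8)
The plan is to reduce the statement about infinite sequences to a statement about the two finite words $a_1\cdots a_M$ and $b_1\cdots b_M$, and then to prove that statement by standardization, thereby reducing it to the permutation case already exploited in Proposition~\ref{prop:periodicequivalence}.

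For the reduction, I would use that $a_k=b_k$ for all $k>M$: it suffices to exhibit a \emph{single} finite sequence of strict Knuth transformations $\kappa_{k_1},\dots,\kappa_{k_m}$, each acting only on positions $\le M$, that carries $a_1\cdots a_M$ to $b_1\cdots b_M$. Applying those same transformations to $a_1a_2a_3\dots$ then yields $b_1b_2b_3\dots$ exactly (the tail beyond $M$ is untouched and already agrees), so $\trunc_N$ of the two sequences agree for every $N$, which is the required equivalence. Thus I am reduced to the following: two words of length $M$ with the same insertion tableau $A_M=B_M$, all of whose prefix insertion tableaux are row-strict, are connected by strict Knuth transformations. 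Note $a_1\cdots a_M$ and $b_1\cdots b_M$ automatically have the same content, since they have the same insertion tableau.

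For the main step I would standardize. Let $\mathrm{std}$ be the standardization compatible with row insertion (equal letters replaced by consecutive integers, earliest occurrence smallest), so that $P(\mathrm{std}(a_1\cdots a_M))$ is the standardization of $A_M$, and likewise for $b$. Since $A_M=B_M$, the \emph{permutations} $\mathrm{std}(a_1\cdots a_M)$ and $\mathrm{std}(b_1\cdots b_M)$ have equal insertion tableaux, hence are Knuth equivalent; because their entries are distinct, these elementary Knuth transformations are automatically strict, exactly as in the proof of Proposition~\ref{prop:periodicequivalence}. This produces a path of strict Knuth moves between the two standardized words. It remains to \emph{de-standardize} this path: applying the order-preserving collapse $\tau$ that returns each standardized value to its original letter sends the two endpoints back to $a_1\cdots a_M$ and $b_1\cdots b_M$ (using equality of contents), and I must check that each intermediate move descends to a strict Knuth move, or to the identity, on the original alphabet.

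This de-standardization is where the real work lies, and it is the step I expect to be the main obstacle. A strict move on the standardized word permutes three distinct values at positions $k,k+1,k+2$; under $\tau$ it descends to a strict move precisely when their three images are distinct letters. If the two swapped values collapse under $\tau$ the descended move is the identity and may be discarded, but if the \emph{unmoved} value coincides with a swapped one, the descent is a degenerate, non-strict triple such as $c\,c\,d\to c\,d\,c$, which is not an allowed $\kappa_k$. The hypothesis that \emph{every} prefix insertion tableau of $a$ and of $b$ is row-strict is exactly what excludes such configurations: two equal letters can never be inserted consecutively into one row, so in any word in our class equal letters are forced apart in a way that forbids these degenerate triples. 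The crux is therefore to choose the connecting path so that the degenerate sub-case never arises — for instance by inducting on $M$, or equivalently by routing both $a_1\cdots a_M$ and $b_1\cdots b_M$ through the reading word of the common insertion tableau while maintaining row-strictness of all prefixes — and then to verify, using row-strictness of every prefix, that each descended move is genuinely strict. Establishing that such a path always exists is the heart of the proof.
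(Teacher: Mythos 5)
You correctly reduce the problem to the finite statement (a single finite sequence of strict Knuth transformations acting within the first $M$ positions, carrying $a_1\cdots a_M$ to $b_1\cdots b_M$, settles every truncation at once), and this matches the paper's framing. But your main step has a genuine gap, and you say so yourself: you never establish that the required path of \emph{strict} moves exists. The standardization detour does not help here. Knuth equivalence of the standardized permutations hands you \emph{some} path of (automatically strict) moves between the standardized words, but you have no control over which path, and an arbitrary path need not descend: as you observe, the unmoved letter of a triple can collapse onto a moved one, producing a degenerate pattern like $c\,c\,d \mapsto c\,d\,c$ that is not a $\kappa_k$. Moreover, your suggested rescue --- that row-strictness of the prefix tableaux ``forbids these degenerate triples'' --- cannot work as stated: the hypothesis constrains only the two endpoint words $a_1\cdots a_M$ and $b_1\cdots b_M$, not the intermediate words of an uncontrolled Knuth path, so nothing prevents a bad triple from appearing mid-path. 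What is needed is an explicitly constructed path, and constructing one is precisely the content of the proposition; your proposal ends where that work should begin.

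The paper closes this gap directly, with no standardization at all. Writing $A_{k,1}A_{k,2}\cdots A_{k,k}$ for the reading word of $A_k$, for each $k \leq M$ the word $A_{k-1,1}\cdots A_{k-1,k-1}\,a_k$ is carried to $A_{k,1}\cdots A_{k,k}$ by the \emph{standard, explicit} sequence of elementary Knuth transformations that realizes Schensted row insertion (see, e.g., Fulton, Section 2.1, or Lascoux--Leclerc--Thibon, Section 6.1); one then checks that when $A_k$ is row-strict, every inequality occurring in that explicit sequence is strict, so each move is a strict Knuth transformation. Iterating over $k = 1, \dots, M$ shows $a_1a_2a_3\dots$ is equivalent to $A_{M,1}\cdots A_{M,M}\,a_{M+1}a_{M+2}\dots = B_{M,1}\cdots B_{M,M}\,b_{M+1}b_{M+2}\dots$, which by the same argument is equivalent to $b_1b_2b_3\dots$. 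This is exactly the ``route both words through the reading word of the common insertion tableau'' option you mention parenthetically as a possible fix --- so your instinct identifies the right construction --- but the verification that this particular path consists of strict moves (the easy check the paper performs on a known explicit sequence) is the actual proof, and it makes the standardize/de-standardize apparatus, with all its attendant difficulties, unnecessary.
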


\begin{proof}
Let $A_{k,1}A_{k,2}\dots A_{k,k}$ denote the reading word of $A_k$.
For $k \leq M$ there is a finite sequence of
elementary Knuth transformations taking
\[
   A_{k-1,1}A_{k-1,2} \dots A_{k-1,k-1}a_k \ \mapsto\ %
   A_{k,1}A_{k,2} \dots A_{k,k-1}A_{k,k}\,;
\]
the precise sequence can be found in many references
(e.g. \cite[Section 2.1]{Ful} or \cite[Section 6.1]{LLT}).
It is easy to verify that
if $A_k$ is row-strict, then all of the transformations in this 
sequence are \emph{strict} Knuth transformations.
This shows that $a_1a_2a_3 \dots$ is equivalent to 
\[
  A_{M,1}A_{M,2}\dots A_{M,M}a_{m+1}a_{m+2} \dots
  =
  B_{M,1}B_{M,2}\dots B_{M,M}b_{m+1}b_{m+2} \dots
\]
which, by the same argument, is equivalent to $b_1b_2b_3 \dots$.
\end{proof}


\section{Descent sequences}
\label{sec:descents}

Recall that $i \in \{1, \dots, n-1\}$ is a \defn{descent} of
$w$ if $w(i) > w(i+1)$;  if $w(i) < w(i+1)$, then $i$ is 
an \defn{ascent} of $w$.
Let $\identity \in S_n$ denote the identity element, and let
$w_0 \in S_n$ denote the long word.
The \defn{major index} of $w$ is defined
to be the sum of the descents of $w$.  For example, 
$\identity$ is the unique permutation with major index $0$, 
$w_0$ is the unique permutation with major index $n(n-1)/2$.

\begin{lemma}
\label{lem:equivalent}
Let $d_1 > d_2 > \dots > d_t$ be the descents of $w$ in decreasing
order, and let $d_i = 0$ for $i > t$.  Then
$w^*$ is equivalent to the sequence
\begin{equation}
\label{eqn:descentseq}
\sigma_{d_1d_2d_3\dots} :=
(d_1{+}1, d_1{+}2, \dots , n,\,
d_2 {+}1 , d_2 {+}2, \dots ,n,\,
d_3 {+}1 , d_3 {+}2, \dots ,n,\,
\dots)\,.
\end{equation}
\end{lemma}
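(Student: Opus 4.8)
The plan is to induct on the number of descents $t$. When $t=0$ we have $w=\identity$, so $w^{-1}=\identity$ and $w^*=(1\,2\cdots n)(1\,2\cdots n)\cdots$, which (with all $d_i=0$) is literally the sequence $\sigma_{d_1d_2d_3\dots}$; the base case is immediate. For the inductive step I would exploit the obvious recursion from the definition~\eqref{eqn:descentseq}, namely $\sigma_{d_1d_2d_3\dots}=(d_1{+}1,\dots,n)\cdot\sigma_{d_2d_3d_4\dots}$, and try to peel off the leading block $(d_1{+}1,\dots,n)$ from $w^*$, reducing to a permutation $w'$ with $\mathrm{Des}(w')=\{d_2,\dots,d_t\}$, for which $\sigma_{d_2d_3d_4\dots}$ is the descent sequence. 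I will use that equivalence of sequences is transitive, which is straightforward to check from the definition.

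First I would normalize $w$ using Proposition~\ref{prop:periodicequivalence}: since $w^*$ is equivalent to $\Hw^*$ whenever $\Hw^{-1}$ and $w^{-1}$ have the same insertion tableau, and since Knuth-equivalent choices of $w^{-1}$ leave the recording tableau of $w$, hence $\mathrm{Des}(w)$ and hence $\sigma_{d_1d_2d_3\dots}$, unchanged, I am free to replace $w^{-1}$ by any convenient representative of its Knuth class. The purpose of this normalization is to control the insertion tableaux occurring during the reduction so that Proposition~\ref{prop:rowstrict} applies. The key structural observation I would isolate and prove is that the prefix of $\sigma$ consisting of its nontrivial blocks $(d_1{+}1,\dots,n)(d_2{+}1,\dots,n)\cdots(d_t{+}1,\dots,n)$ has \emph{row-strict} insertion tableaux: these blocks are increasing runs with strictly decreasing starting values $d_1{+}1>d_2{+}1>\cdots>d_t{+}1\geq 2$ (so the value $1$ never occurs among them), and this nesting forces each $A_k$ to be row-strict. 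This is exactly the hypothesis needed to guarantee that the elementary Knuth moves used in the rectification step of Proposition~\ref{prop:rowstrict} are all \emph{strict}; past the end of the nontrivial blocks $\sigma$ becomes the periodic tail $(1\,2\cdots n)(1\,2\cdots n)\cdots$, where row-strictness is neither available nor required.

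The heart of the argument is the peeling step, which is a cyclage. Here I would show that $w^*$ is equivalent to $(d_1{+}1,\dots,n)$ followed by a sequence that, via Propositions~\ref{prop:periodicequivalence} and~\ref{prop:rowstrict}, is equivalent to $(w')^*$ for a permutation $w'$ with $\mathrm{Des}(w')=\{d_2,\dots,d_t\}$. Intuitively the leading run $(d_1{+}1,\dots,n)$ records the entries lying above the top descent, and moving them to the front by strict Knuth transformations is precisely a cyclage of the periodic word, leaving a residual word that is again periodic up to a finite correction; Proposition~\ref{prop:rowstrict} is then what lets me identify this residual word with $(w')^*$. Applying the inductive hypothesis gives $(w')^*\sim\sigma_{d_2d_3d_4\dots}$, and concatenation yields $w^*\sim(d_1{+}1,\dots,n)\cdot\sigma_{d_2d_3d_4\dots}=\sigma_{d_1d_2d_3\dots}$.

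I expect the main obstacle to be making this cyclage/peeling step precise: pinning down the correct permutation $w'$ and proving that the passage from $w^*$ to the leading run followed by $(w')^*$ is realized, on every truncation, by strict Knuth transformations. This is where the row-strictness of the nested-run insertion tableaux is indispensable — without strict inequalities the transformations $\kappa_k$ are simply not defined — and where the truncation bookkeeping built into the definition of equivalence (one matches only a finite prefix, while the Knuth moves may act arbitrarily far to the right) must be tracked with care. I would therefore treat the row-strictness lemma for nested increasing runs and the cyclage reduction as the two technical heart of the proof, with Propositions~\ref{prop:periodicequivalence} and~\ref{prop:rowstrict} supplying the transitions between the periodic word and its descent-sequence normal form.
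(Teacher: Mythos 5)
Your base case, the normalization via Proposition~\ref{prop:periodicequivalence}, and the observation that prefixes of the nontrivial blocks of $\sigma_{d_1d_2d_3\dots}$ have row-strict insertion tableaux are all sound (the last of these is essentially what legitimizes the paper's own final appeal to Proposition~\ref{prop:rowstrict}). The genuine gap is the step you yourself flag as the main obstacle: the block-peeling claim that $w^*$ is equivalent to $(d_1{+}1,\dots,n)$ followed by $(w')^*$ for some $w'$ with descent set $\{d_2,\dots,d_t\}$. You neither construct $w'$ nor indicate how the peel is realized by strict Knuth transformations, and the two tools you cite cannot deliver it in the form stated. Proposition~\ref{prop:periodicequivalence} only lets you cycle the \emph{canonical} first letter: writing $b_1b_2\dots b_n$ for the reading word of the insertion tableau of $w^{-1}$, one gets $w^*\sim(\epsilon_w,(w')^*)$ with $\epsilon_w=b_1$ for free, since shifting the period is an identity of sequences. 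But $\epsilon_w$ is Knuth-invariant and is generally \emph{not} $d_1{+}1$: if the insertion tableau of $w^{-1}$ has rows $\{1,4\},\{2,5\},\{3\}$ (so $n=5$ and the descents of $w$ are $4>2>1$), then $\epsilon_w=3=d_2{+}1$, so the first letter peeled comes from the \emph{second} block, and iterating the cyclage interleaves letters across blocks rather than consuming $(d_1{+}1,\dots,n)$ first. Nor can Proposition~\ref{prop:rowstrict} bridge $w^*$ and $(d_1{+}1,\dots,n)\cdot(w')^*$ directly: it requires the two sequences to agree beyond a finite point, whereas these have different periodic tails. Finally, a single cyclage step does not remove $d_1$ from the descent set --- it increments some $d_s$ to $d_s{+}1$, removing a descent only when $\epsilon_w=n$ --- so the number of descents $t$ is usually unchanged and your induction makes no progress at the granularity you chose.

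The paper resolves exactly these difficulties by working one letter at a time and inducting on the finer statistic $M(w)=\sum_j(n-d_j)$, the major index of $w_0ww_0$, which drops by exactly $1$ at each cyclage step. Iterating the free identity $w^*\sim(\epsilon_w,(w')^*)$ gives $w^*\sim(w^\#,\identity^*)$, and the only place strict Knuth rearrangements are actually needed is the terminal step $(\epsilon_w,\sigma_{d'_1d'_2d'_3\dots})\sim\sigma_{d_1d_2d_3\dots}$, where $\sigma_{d'_1d'_2d'_3\dots}$ is obtained from $\sigma_{d_1d_2d_3\dots}$ by deleting the \emph{first occurrence} of $\epsilon_w$; there the tails agree, so Proposition~\ref{prop:rowstrict} applies, with your row-strictness observation doing the work. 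To make your block peel precise you would have to track exactly this first-occurrence bookkeeping and the $d_s\mapsto d_s{+}1$ dynamics, at which point your induction on $t$ collapses into the paper's induction on $M(w)$. The outline is salvageable, but the missing step is the entire content of the lemma, not a technicality.
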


\begin{proof}
For a permutation $w \in S_n$ define $\epsilon_w \in \{1, \dots, n\}$ and
permutations
$\Hw, w' \in S_n$ as follows.
Let $b_1b_2 \dots b_n$ be the reading
word of the insertion tableau of $w^{-1}$.
Let $\epsilon_w := b_1$.  Let $\Hw$ and $w'$ be the permutations 
whose inverses are represented by the words 
$b_1 b_2 \dots b_n$ and $b_2 b_3 \dots b_n b_1$, 
respectively, in one line notation.
Thus 
\[
   \big(\epsilon_w, (w')^*\big)  =
    b_1 b_2 \dots b_n \, b_1 b_2 \dots b_n \, b_1 b_2 \dotsc
     = \Hw^*\,.
\]
By Proposition~\ref{prop:periodicequivalence},
$w^*$ is equivalent to $\Hw^* = \big(\epsilon_w,(w')^*\big)$.
Using this argument repeatedly, $w^*$ is equivalent
to the sequence
\[
   \big(\epsilon_w, \epsilon_{w'}, \epsilon_{w''}, \dots,
      \epsilon_{w^{(M-1)}},
    (w^{(M)})^*\big)
\]
for any $M \geq 0$.
Since Knuth transformations preserve the descents of the inverse 
of a permutation, the descents of $w$ are the same as descents of $\Hw$.

Let $M(w)$ be the
major index of $w_0 w w_0$, which is equal to 
$(n-d_1) + \dots + (n-d_t)$.
Suppose $w \neq \identity$.  Then $\epsilon_w > 1$.  
Since $\Hw(\epsilon_w) = 1$, and $w'(\epsilon_w) = n$,
$\epsilon_w{-}1$ is a descent of $w$ and an ascent of $w'$.
If $\epsilon_w < n$, then $\epsilon_w$ is an ascent of $w$ and a 
descent of $w'$.
For $i \notin \{\epsilon_w{-}1, \epsilon_w\}$,
$i$ is a descent of $w$ if and only if $i$ is a descent of $w'$.
It follows from these remarks that $M(w') = M(w) -1$.

Since $M(w^{(M(w))}) = 0$, $w^{(M(w))} =\identity$.
Thus we have shown $w^*$ is equivalent $(w^\#, \identity^*)$, where
\[
w^\# := 
   \big(\epsilon_w, \epsilon_{w'}, \epsilon_{w''}, \dots ,
      \epsilon_{w^{(M(w)-1)}}\big)\,.
\]
We now show, by induction on $M(w)$, that $(w^\#,\identity^*)$ 
is equivalent to $\sigma_{d_1d_2d_3\dots}$.
If $w = \identity$, the result is
trivial.  Suppose $M(w)>1$ and assume the result is true 
for $w'$.  Then $(w^\#, \identity^*) = (\epsilon_w , (w')^\#, \identity^*)$
is equivalent to $(\epsilon_w, \sigma_{d'_1d'_2d'_3\dots})$,
where
$d'_1 > \dots > d'_{t'}$ are the descents of $w'$, 
and $d'_i = 0$ for $i > t'$.
The arguments above show that $\epsilon_w = d_s{+}1$ for some $s \leq t$;
if $\epsilon_w < n$, then $d'_i = d_i$ for $i \neq s$, 
and $d'_s = \epsilon_w$;
if $\epsilon_w = n$, then $d'_i = d_{i+1}$ for all $i$.
In either case, $\sigma_{d'_1d'_2d'_3\dots}$ is 
obtained from $\sigma_{d_1d_2d_3 \dots}$ by deleting the 
first occurrence of $\epsilon_w$.
With this in mind, it follows readily from 
Proposition~\ref{prop:rowstrict}
that $(\epsilon_w, \sigma_{d'_1d'_2d'_3\dots})$  
is equivalent to $\sigma_{d_1d_2d_3\dots}$.
\end{proof}

\begin{lemma}
\label{lem:welldefined}
Assume that $n$ is the number of columns of\/ $\Rect$\,.
Let $\sigma = \sigma_{d_1d_2d_3 \dots}$ 
be the sequence in~\eqref{eqn:descentseq}. 
\begin{packedenumi}
\item 
$\boxseq_\sigma^\la+$ is well-defined:  the $k$\nth box of this 
sequence is in column $c_k$, where
\[
c_1 c_2 c_3 \dotsc  := 
(1, 2, \dots, n{-}d_1, \,
1, 2, \dots, n{-}d_2, \,
1, 2, \dots, n{-}d_3, \,
\dots)\,.
\]
\item
If $C_i$ is the length of the $i$\nth column of 
$\la+$, then
\[
\delta_\sigma^\la+(i) = C_i
- \#\big\{j \bigmid d_j \geq i\big\} 
+ \#\big\{j \bigmid d_j \geq n{+}1{-}i\big\} 
- 1
\qquad\text{for $i = 1, \dots, n$.}
\]
\item
For any other diagonal $\primela+/\primela-$ of\/ $\Rect$\,, 
$\boxseq_\sigma^\primela+$ is compatible with $\boxseq_\sigma^\la+$.
\end{packedenumi}
\end{lemma}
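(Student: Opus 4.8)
The three parts are interlocking, but part~(i) carries essentially all the content; (ii) and (iii) are bookkeeping consequences. My plan is to prove (i) by induction on the blocks of $\sigma$, where block $j$ is the increasing run $(d_j{+}1, \dots, n)$, maintaining an explicit description of the skew shape $S_k$ of the tableau $U_k$ produced by Algorithm~\ref{alg:sequence}. Two observations organize the induction. First, since the diagonal places exactly one box in each of the columns $1, \dots, n$, the shape $\la-$ has strictly decreasing column heights, and the descents are fed in decreasing order $d_1 > d_2 > \dots$, so the successive blocks should peel off nested, left-justified horizontal strips. Second, within a block $\sigma$ is strictly increasing, so Proposition~\ref{prop:descents} forces the landing boxes to satisfy $\one_k < \one_{k+1}$; since an increasing run slides out a horizontal strip (the same fact used in the proof of Proposition~\ref{prop:descents}), these boxes lie in distinct columns, so their columns strictly increase. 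Thus it suffices to show that in block $j$ the largest landing column is $\le n - d_j$: a strictly increasing sequence of $n - d_j$ column indices bounded by $n - d_j$ must be exactly $1, 2, \dots, n{-}d_j$.

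The step I expect to be the main obstacle is exactly this column bound, together with its independence from the filling $U$. A priori the \emph{jeu de taquin} path of each reverse slide depends on the entries of $U$, so it is not clear that the landing box, let alone its column, is determined by $\la-$ and $\sigma$ alone. I would separate the two issues. For independence from $U$, I would invoke Haiman's dual equivalence theory \cite{Hai}: all $U \in \SYT(\la-)$ lie in a single dual equivalence class, and the reverse slides of Algorithm~\ref{alg:sequence} preserve dual equivalence, so the sequence of shapes $S_0 \supseteq S_1 \supseteq \dots$, and hence each removed box $\one_k = S_{k-1}\setminus S_k$, is the same for all $U$. The one point needing care is that deleting the entry at $\diag_{\sigma_k}$, being the removal of a fixed corner, does not disturb the dual equivalence; I would check this via the locality of elementary dual equivalences. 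Having reduced to a single convenient filling, I would compute the columns from the shape recursion $S_k = S_{k-1}\setminus\{\one_k\}$, verifying the bound by the nested-strip picture above. This simultaneously proves that $\boxseq_\sigma^\la+$ is well-defined and that its $k$th box lies in column $c_k$.

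Part~(ii) is then a counting argument on top of (i). Because $\sigma$ contains infinitely many full runs $(1,\dots,n)$, the shape is eventually emptied, so exactly $C_i - 1$ boxes (the number of boxes of $\la-$ in column $i$) are removed from column $i$ over the whole process. By (i), a slide with $\sigma_k = i$ occurs once in each block with $d_j < i$ and lands in column $i - d_j$; those with $d_j \ge 1$ always move the hole strictly left and so are nontrivial, while the slides that remove boxes from column $i$ are precisely those landing there, namely the $\sigma_k = d_j + i$ slides with $d_j \le n{-}i$. Counting the nontrivial $d_j = 0$ slides with $\sigma_k = i$ as $(C_i - 1) - \#\{j : 1 \le d_j \le n{-}i\}$ and adding the $d_j \ge 1$ contributions $\#\{j : 1 \le d_j \le i{-}1\}$ gives
\[
\delta_\sigma^\la+(i) = (C_i - 1) + \#\{j : 1 \le d_j \le i{-}1\} - \#\{j : 1 \le d_j \le n{-}i\},
\]
and rewriting the two counts through the total number of descents turns this into the stated formula.

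Finally, part~(iii) follows from the explicit description in (i) once one notes that the columns $c_k$ depend only on the descents of $w$, and so are the same for $\la+$ and for any other diagonal $\primela+$. Running the two processes in parallel, each step $k$ removes a box in the common column $c_k$; on the overlap the removed boxes coincide, which is exactly compatibility. To make the comparison rigorous I would, as in the reduction underlying Proposition~\ref{prop:strategy}(iii), connect the two diagonals through a chain of diagonals differing in a single box and compare the shape recursions one box at a time.
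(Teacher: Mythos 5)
Your parts (ii) and (iii) are sound modulo (i): your count $\delta_\sigma^\la+(i) = (C_i-1) + \#\{j : 1 \le d_j \le i{-}1\} - \#\{j : 1 \le d_j \le n{-}i\}$ rearranges exactly to the stated formula, and (iii) does follow immediately once the columns are known to depend only on the descents. The genuine gap is in (i), and it sits precisely at the step you flag as ``needing care.'' You propose to get independence of $U$ from Haiman's theory: all $U \in \SYT(\la-)$ lie in one dual equivalence class, slides preserve dual equivalence, and deleting the entry at the fixed corner $\diag_{\sigma_k}$ ``does not disturb'' it. That last claim is false, and the paper says so explicitly in Section~\ref{sec:remarks}: for the generalization of Algorithm~\ref{alg:sequence} to skew shapes, dual equivalent fillings $U$ and $\HU$ need \emph{not} satisfy $\boxseq_{w^*}^U = \boxseq_{w^*}^{\HU}$, and the authors remark that if this invariance held it ``would provide a more straightforward proof of Theorem~\ref{thm:welldefined}.'' Your mechanism uses straightness of $\la-$ only to place all fillings in one dual equivalence class, so if it were sound it would prove the skew statement verbatim for any two dual equivalent skew fillings --- contradicting that remark. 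The underlying problem is that dual equivalence controls shape evolution under slide sequences, not under deletions: after the reverse slide, the entries occupying $\diag_{\sigma_k}$ in $U_k$ and $\HU_k$ may have different relative ranks, and deleting them (with restandardization) can exit the dual equivalence class. ``Locality of elementary dual equivalences'' does not repair this.

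Consequently the reduction to a single convenient filling is unavailable, and your remaining plan --- verify the column bound for one filling via the ``nested-strip picture'' --- is in any case an assertion of the conclusion rather than an argument. The paper instead fixes an \emph{arbitrary} $U$ and proves $e_{[i,j]} = i$ by strong induction on blocks, via three claims: (a) $e_{[i,j']} \ge i$ within a block (your increasing-columns observation, which is correct); (b) the landing boxes in the new, longer part of a block lie in the first row; and (c) $e_{[i,j']} \le e_{[i,j]}$, comparing consecutive blocks. Claim (c) is the heart of the matter, and it is proved not by tracing slides in a specific filling but by applying an explicit composition $K_{p-1} \circ \dots \circ K_1$ of strict Knuth transformations to $\sigma$ that interleaves the two blocks into $(a_1 > b_1 < a_2 > b_2 < \dots)$, then using Proposition~\ref{prop:equivariant} (equivariance of $\boxseq$ under strict Knuth moves) and Proposition~\ref{prop:descents} (forced descent patterns) to pin down the transformed box sequence and conclude $\alpha_i > \beta_i$. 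Note that because this induction runs for every $U$ simultaneously, well-definedness falls out of the column computation itself; no dual equivalence between distinct fillings $U, \HU$ is ever invoked --- dual equivalence enters only inside Proposition~\ref{prop:equivariant}, applied to Knuth moves on $\sigma$, which is a different (and valid) use of Haiman's results. Without a substitute for claim (c) that avoids the broken deletion step, your proposal does not establish (i), and hence none of the lemma.
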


\begin{proof}
We introduce the notation 
\[[i,j] := i + \sum_{s=1}^{j-1} (n-d_s)\,,
\qquad\text{for $1 \leq i \leq n{-}d_j$ and $j \geq 1$}. 
\]
Thus $c_{[i,j]} = i$ and $\sigma_{[i,j]} = d_j+ i$ for all $i, j$.
Fix $U \in \SYT(\la-)$, 
and write $\boxseq_\sigma^U = \one_1\ \one_2\ \one_3 \dots.$
Suppose $\one_k$ is in column $e_k$.
To prove (i), we need to show that $e_{[i,j]} = i$
for all $i,j$.  
We will do this by strong induction on $j$.  

Fix $j \geq 0$. 
Assume that $e_{[i,s]} = i$ for 
$1 \leq s \leq j$, $1 \leq i \leq n-d_s$.
Let $d_0 := n$, $j' := j+1$, $p := n - d_j$, $p' := n-d_{j'}$.
We will prove the following: 
\begin{packedenum}
\item[(a)]
$e_{[i,j']} \geq i$ for $1 \leq i \leq p'$; 
\item[(b)]
$e_{[i,j']} = e_{[i-1,j']}+1$ for $p < i \leq p'$
(where $e_{[0,1]} := 0$);
\item[(c)]
$e_{[i,j']} \leq e_{[i,j]}$ for $1 \leq i \leq p$.
\end{packedenum}
These imply that $e_{[i,j']} = i$ for $i = 1,\dots, p'$.

Since 
\[\sigma_{[1,j']}\sigma_{[2,j']}\dots\sigma_{[p',j']}
 = (d_{j'}{+}1 < d_{j'}{+}2 < \ldots < n)\,,
\]
by Proposition~\ref{prop:descents}
we have 
$\one_{[1,j']} < \one_{[2,j']} < \dots < \one_{[p',j']}$.
Because of the order in which the slides are performed,
$\one_{[i+1,j']}$ cannot be above $\one_{[i,j]}$ and in the same column.
It follows that 
$e_{[1,j']} < e_{[2,j']} < \dots < e_{[p',j']}$,
which proves (a).
Since all boxes $\one_1\,, \dots,\,\one_{[p,j]}$ are
in the first $p$ columns, any box $\one_{[i,j']}$ which is not in
the first $p$ columns must be in the first row.  In particular
this applies when $i > p$, which proves (b).

If $p' = n$, then (c) follows immediately from (a).  To complete
the proof of (i), suppose that $p' < n$.  Then $d_j > d_{j'}$.
Let $a_i := \sigma_{[i,j]} = d_j+i$ 
and let $b_i := \sigma_{[i,j']} = d_{j'}+i$.
Consider sequence obtained from $\sigma$ by changing the subsequence
of length $2p$ starting at $\sigma_{[1,j]}$ from
\[
(a_1< a_2 < \ldots < a_p > b_1< b_2 < \dots < b_p)
\]
to
\[
   (a_1 > b_1 < a_2 > b_2 < \ldots < a_p > b_p)\,.
\]
This transformation can be realized as
$K_{p-1} \circ K_{p-2} \circ \dots \circ K_1(\sigma)$,
where
\[
K_i := \kappa_{[1,j]+2i-2} \circ \kappa_{[1,j]+2i-1} \circ \kappa_{[1,j]+2i} \dots 
\circ \kappa_{[1,j]+i+p-3}
\]
is the composition of strict Knuth transformations that
moves $b_i$ next to $a_i$.  
For example, $K_1$ performs the following sequence of transformations:
\begin{align*}
(a_1 <  &\ldots< a_{p-2} < a_{p-1}< a_p > \boldsymbol{b_1}
< b_2 < \ldots < b_p)
\\
& 
\mapsto \ %
(a_1 <  \ldots < a_{p-2} < a_{p-1} > \boldsymbol{b_1} < a_p
> b_2 < \ldots < b_p)
\\
&
\mapsto \ %
(a_1 <  \ldots < a_{p-2} > \boldsymbol{b_1} < a_{p-1} < a_p
> b_2 < \ldots < b_p)
\\
&\dots\\
& 
\mapsto \ %
(a_1 > \boldsymbol{b_1} < a_2 < \ldots < a_p > b_2 < \ldots < b_p)\,.
\end{align*}
Here we have recorded only the subsequence of length $2p$ starting
at $[1,j]$ --- the remaining terms are unaffected by these transformations.

Let $\alpha_i := \one_{[i,j]}$ and let $\beta_j := \one_{[i,j']}$.
The corresponding subsequence of $\boxseq_\sigma^U$ is
\[
   (\alpha_1 < \alpha_2 < \ldots < \alpha_p > \beta_1 < \beta_2 < \ldots < \beta_p)\,.
\]
By Proposition~\ref{prop:equivariant}, 
$K_{p-1} \circ K_{p-2} \circ \dots \circ K_1(\boxseq_\sigma^U)$ is defined,
and by Proposition~\ref{prop:descents}, each strict Knuth 
transformation must produce a
sequence with the correct descent pattern.
Using these two facts, one can deduce (by a straightforward
inductive argument) 
that for all $r  = 1, \dots, p-1$,
the corresponding subsequence of
$K_r \circ \dots \circ K_1(\boxseq^U_\sigma)$ must be of the form
\[
(\alpha_{q_1} > \gamma_1 < \alpha_{q_2} < \gamma_2 > 
\ldots < \alpha_{q_r} > \gamma_r < \gamma_{r+1} < \ldots < \gamma_p 
> \beta_{r+1} < \ldots < \beta_p)\,,
\]
where $1 \leq q_1 < q_2 < \dots < q_r \leq p$ 
and $(\gamma_1 < \gamma_2  < \cdots < \gamma_p)$ is obtained from
$\alpha_1\alpha_2 \dots \alpha_p$ by replacing $\alpha_{q_i}$ 
replaced by $\beta_i$ for
$i =1, \dots,r$.   In particular, when $r = p-1$, we have $\gamma_p > \beta_p$;
thus $\gamma_p = \alpha_p$, and $q_i = i$ for all $i$.
This shows that
\[
K_{p-1} \circ \dots \circ K_1(\boxseq_\sigma^U)
= (\dots\ \alpha_1 > \beta_1 < \alpha_2 > \beta_2 < \ldots < \alpha_p > \beta_p \ \dots)\,.
\]
The descent pattern of this sequence establishes that $\alpha_i > \beta_i$, 
which proves (c).

For (ii), suppose that $C_i$\nth occurrence of $i$ in
the sequence $c_1c_2c_3 \dots$ occurs at $[s,i]$.
Since the subsequence 
\[
c_{[1,j]} c_{[2,j]}\dots c_{[n-d_j,j]} = (1,2, \dots, n{-}d_j)
\]
excludes $i$ if and only if $d_j \geq n+1-1$, 
$s = C_i + \#\{j \mid d_j \geq n{+}1{-}i\}$.
By (i), $\one_k$ is in column $c_k$, if and only if $k=[j,i]$
for some $j$, and $\one_k = \diag_i$ when $j \geq s$;
therefore
$\delta_\sigma^{\la+}(i)$ is the number of occurrences of $i$ 
in the sequence $\trunc_{[n-d_{s-1},s-1]}(\sigma)$.
Since the subsequence
\[
\sigma_{[1,j]} \dots \sigma_{[n-d_j,j]}
 = (d_j{+}1, d_j{+}2, \dots, n)
\]
excludes $i$ if and only if $d_j \geq i$, 
$\delta_\sigma^\la+(i) = s-1 - \#\{j \mid d_j \geq i\}$, 
as required.

Finally, (iii) follows immediately from (i).
\end{proof}


\section{Proofs}
\label{sec:proofs}
We now prove Theorems~\ref{thm:welldefined}, \ref{thm:combine} 
and \ref{thm:bijection}.

\begin{proof}[Proof of Theorem~\ref{thm:welldefined}]
Since the result is symmetrical with respect to rows and columns,
we may assume, without loss of generality, that $n$ is the number 
of columns of $\Rect$\,.
By Lemma~\ref{lem:equivalent} and Lemma~\ref{lem:welldefined}(i),
$w^*$ is equivalent to a sequence $\sigma$ such that $\boxseq_\sigma^\la+$
is well-defined.  The theorem 
therefore follows from Proposition~\ref{prop:strategy}(i).
\end{proof}

\begin{proof}[Proof of Theorem~\ref{thm:combine}]
Again, assume, without loss of generality, that $n$ is the number 
of columns of $\Rect$\,.  
Using Lemma~\ref{lem:equivalent}, Lemma \ref{lem:welldefined}(ii), 
Proposition~\ref{prop:strategy}(ii), and Proposition~\ref{prop:reformulate},
we compute that
\[
  T_w^\la+[\diag_i] 
   = w(i) + n\cdot\Big(
   C_i -
   \#\{j \mid d_j \geq i\}
   + \#\{j \mid d_j \geq n{+}1{-}i\} - 1 \Big)\,,
\]
where $d_1 > d_2 > \dots > d_t$ are the descents of $w$, and
 $C_i$ is the length of column $i$ in $\la+$.
For a partition $\lambda \subset \Rect$\,,
with row lengths $(\lambda_1, \dots, \lambda_m)$,
let $\lambda^\vee$ denote the partition  with row lengths
$(n-\lambda_m, \dots, n-\lambda_1)$.
If $\one$ is the
box of $\Rect$ in column $i$ and row $j$, let $\one^\vee$ denote the
box in column $n+1-i$ and row $m+1-j$.   For a skew tableau $T$
of shape $\lambda/\mu \subset \Rect$\,, let $T^\vee$ denote the 
tableau of shape $\mu^\vee/\lambda^\vee$ with entries 
$T^\vee[\one] := mn+1-T[\one^\vee]$.
The relationship between Algorithms~\ref{alg:forward}
and~\ref{alg:reverse} is
\begin{equation}
\label{eqn:forwardreverse}
   T_w^{\Rect/\la-} = (T_{w_0ww_0}^\veela-)^\vee\,.
\end{equation}
Note that
$m+1 - C_{i}$ is the length of column $n+1-i$ in $\veela-$,
and 
$n{-}d_1 < n{-}d_2 < \dots < n{-}d_t$ are the descents of $w_0ww_0$.
We compute:
\begin{align*}
T_w^{\Rect/\la-}[\diag_i] 
&= (T_{w_0ww_0}^\veela-)^\vee[\diag_i]  \\
&= mn+1 - T_{w_0ww_0}^\veela-[\diag_i^\vee] 
\\
&= mn+1 - \Big(w_0ww_0(n{+}1{-}i) 
+ n \cdot \delta_{(w_0ww_0)^*}^\veela-(n{+}1{-}i)\Big) \\
&= 
mn+1 - \left(
n{+}1{-}w(i) 
+ n \cdot \left(
\begin{aligned}
(m{+}1{-}C_i)  
+ \#\{j \mid n{-}d_j \geq n{+}1{-}i\}&
\\
-\ \#\{j \mid n{-}d_j \geq i\} &
- 1
\end{aligned}
\right)
\right)
\\
&= w(i) + n\cdot\Big(
   C_i -
   \#\{j \mid n{-}d_j \geq n{+}1{-}i\}
   + \#\{j \mid n{-}d_j \geq i\} - 1 \Big)
\\
&=
T_w^\la+[\diag_i]\,,
\end{align*}
i.e. $T_w^\la+$ and $T_w^{\Rect/\la-}$ agree on $\la+/\la-$.
For each vertically adjacent pair of boxes, either both are in $\la+$
or both are in $\Rect/\la-$.  Thus the agreement on $\la+/\la-$
shows that $T_w$ is column strict.  It also follows now, from
Lemma~\ref{lem:equivalent},
Lemma~\ref{lem:welldefined}(iii) and Proposition~\ref{prop:strategy}(iii),
that $T_w$ is independent of the choice of $\la+$.
Thus we may we assume $\la+ = (n, n{-}1, \dots, 2,1)$, which allows
us to see that $T_w$ is row-strict.

Let $D := \{T_w[\diag_i] \mid i = 1, \dots, n\}$ be the
set of diagonal entries of $T_w$.  Algorithm~\ref{alg:forward}
ensures that $T_w[\diag_i] \equiv w(i) \pmod n$, so $D$
contains one number from each congruence class, modulo $n$.
Since the entries of $T_w^\la+$ are 
\[
\big\{k \geq 1 \bigmid k+nj \in D \text{ for some }j \geq 0\big\}
\]
and the entries of $T_w^{\Rect/\la-}$ are
\[
\big\{k \leq mn \bigmid k-nj \in D \text{ for some }j \geq 0\big\}\,,
\]
we see that every number in $\{1, 2,\dots, mn\}$ is an entry of $T_w$.
Therefore $T_w$ is a standard Young tableau.
\end{proof}

\begin{proof}[Proof of Theorem~\ref{thm:bijection}]
For (i), we may assume that $\la+ = (n, n{-}1, \dots, 2,1)$.
This ensures that the sliding path of promotion on any $T \in \SYTRect$
passes through exactly one box of $\la+/\la-$.
Suppose that $T_w^\la+$ is computed using Algorithm~\ref{alg:forward}
by a sequence of slides whose
first step moves the entry $1$.  After this first step, if we delete 
the entry $1$ and decrement all entries by $1$, we are computing 
$T_{wc}^\la+$ instead.  This can be done immediately, or at any
point during  Algorithm~\ref{alg:forward}.
Compare this with the behaviour of  $\promote : T_w \to \promote T_w$ 
on the entries in $\la+$.  Suppose the sliding path passes through
$\la+/\la-$ at $\diag_s$.  
Since the first two steps of promotion delete 
the entry $1$ and decrement all entries by $1$, this produces
the penultimate step in the construction of $T^{\la+}_{wc}$.  Next, we
slide the empty box in the upper-left corner of $\Rect$ through the
tableau, which is the almost the same as the final of step 
in the construction of $T^{\la+}_{wc}$, except that
do not yet know what number will appear in $\promote T_w[\diag_s]$.
This shows that for all boxes of $\la+$, with
the possible exception of $\diag_s$, $T_{wc}$ coincides with 
$\promote T_w$.  Note that $s$ is the unique number such that
$T_{wc}[\diag_s] - T_{w}[\diag_s] \neq -1$.

Applying the same argument to $\Rect/\la-$ and the sliding path of
$\promote^{-1} : T_{wc} \to \promote^{-1} T_{wc}$, we see that
with the possible exception of one box $\diag_{s'}$,
$T_w$ coincides with 
$\promote^{-1} T_{wc}$ on $\Rect/\la-$.
Since $s'$ is the unique number such that 
$T_{wc}[\diag_{s'}] - T_{w}[\diag_{s'}] \neq -1$, we must have
$s = s'$.  This shows that these two sliding paths
are in fact inverse to each other, and hence $\promote T_w = T_{wc}$.

Since $T_w[\diag_i] \equiv w(i) \pmod n$,
(ii) is immediate.

To prove (iii), we use another reformulation
of Algorithm~\ref{alg:forward}.

\begin{algorithm}
\label{alg:likepromotion} \emph{INPUT:} A permutation $w \in S_n$. 
\begin{algorithmic}
\State
Begin with $T := \varnothing$, the empty tableau, and $\mu := \Rect$\,;
\While{$\mu$ is not the empty partition}
\State  Choose a corner box $\one \in \mu$;
\If {$\one = \diag_i$ for some $i$} 
   \State Set $T[\diag_i] := w(i)$;
\EndIf
\If {$\one \in \la-$}
\State Let $T'$ be the tableau obtained by sliding $\one$ through $T$; 
\State If the final position of the sliding path is $\diag_i$, then set 
$T'[\diag_i] := T[\diag_i] + n$; 
\State  Set $T := T'$;
\EndIf
\State Delete the box $\one$ from $\mu$;
\EndWhile
\State \Return the resulting tableau, $T_w^\la+ := T$.
\end{algorithmic}
\end{algorithm}

It is clear that Algorithm~\ref{alg:likepromotion} is equivalent to
Algorithm~\ref{alg:forward}: when $\one \notin \la+$ nothing happens;
when $\one \in \la+/\la-$ we create the intial entries of $T$;
when $\one \in \la-$ we proceed exactly as before.

Suppose $T \in \calO_n$.  For $i,k = 1, \dots, n$, let
$\Delta_{ik} := \promote^kT [\diag_i] - \promote^{k-1}T[\diag_i]$.
Thus $\Delta_{ik} \geq 0$ if and only if
the sliding path of $\promote : \promote^{k-1} T \mapsto \promote^k T$,
passes through $\diag_i$, and $\Delta_{ik} = -1$ otherwise.
The former can happen for at most one value of $i$.
Since $\promote^n T = T$, 
\[
   \Delta_{i1} + \Delta_{i2} + \dots \Delta_{in} = 0\,.
\]
Therefore, for each $i$, 
there must be at least one $k$ such that $\Delta_{ik} \geq 0$.
It follows that for each $k$ there is exactly one $i$ such that
$\Delta_{ik} \geq 0$, and for each $i$ there is exactly one $k$ such that
$\Delta_{ik} \geq 0$.  From this we see that 
if $\Delta_{ik} \geq 0$ then $\Delta_{ik} = n-1$, and therefore,
for all $k \geq 0$, 
\[
   \promote^kT [\diag_i] - (mn-k) \equiv w(i)  \pmod n\,.
\]

For $k=1, \dots, mn$, construct a tableau $T_k$ by starting with
$\promote^k T$,
subtracting $mn-k$ from all entries, and deleting any entries for which
the result is
less than or equal to $0$.  Let $\mu_k$ be the shape formed by the
unfilled boxes of $T_k$.  Thus $T_0$ is empty, $\mu_0 = \Rect$\,, and 
$T_k$ is obtained by $T_{k-1}$ as follows:
let $\one_k \in \mu_{k-1}$ be the corner of $\mu_{k-1}$ on the sliding
path of 
$\promote : \promote^{k-1}T \mapsto \promote^k T$;
slide $\one_k$ through $T_{k-1}$; add entry $k$ in the lower-right corner
of $\Rect$.

Let $w: \{1, \dots, n\} \to \{1, \dots, n\}$ be the function defined 
in the statement of (iii).
If $T_k[\diag_i]$ is non-empty, then $T_k[\diag_i] \equiv w(i) \pmod n$.
Since $\Delta_{ik} < n$ for all $k$, if 
$\one_k = \diag_i$ then $T_k[\diag_i] \leq n$, i.e. $T_k[\diag_i] = w(i)$;
and if $\one_k \in \la-$ and the sliding path of $\one_k$ passes 
through $\diag_i$, then $T_k[\diag_i] = T_{k-1}[\diag_i] + n$.
Thus if we restrict the sequence $T_0, T_1, \dots, T_{mn}$ to $\la+$,
we obtain precisely a sequence of tableaux produced by 
Algorithm~\ref{alg:likepromotion}.  Since $\promote^{mn}T = T$,
this shows that $T_w^\la+$ is the restriction of $T$ to $\la+$.
Since $T$ has no repeated entries, $w \in S_n$.
By a similar argument $T_w^{\Rect/\la-}$ is the restriction of $T$ to 
$\Rect/\la-$.  Thus $T = T_w$.  
\end{proof}


\section{Remarks}
\label{sec:remarks}

Here is another way to compute $T_w^\la+$.  Assume that
$n$ is the number of rows of $\Rect$\,.
Define the \defn{augmented word} of $w$ to be:
\begin{align*}
\aug(w)\, := \,
  &w(1), w(1)+n, w(1)+2n, \dots w(1)+(m-1)n, \\
  &w(2), w(2)+n, w(2)+2n, \dots w(2)+(m-1)n, \\
  & \; \dots \\
  &w(n), w(n)+n, w(n)+2n, \dots w(n)+(m-1)n\,. \\
\end{align*}

\begin{theorem}
\label{thm:insertion}
$T_w^\la+$ is the restriction of the insertion tableau of $\aug(w)$
to $\la+$.
\end{theorem}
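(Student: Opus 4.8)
The plan is to connect Algorithm~\ref{alg:forward} for $T_w^\la+$ with the RSK insertion tableau of $\aug(w)$ by exploiting the reformulation established in Section~\ref{sec:strategy}. Recall from Proposition~\ref{prop:reformulate} that $T_w^U[\one]$ is the smallest $k$ with $\one_k = \one$, where $\boxseq_{w^*}^U = \one_1\,\one_2\,\one_3\,\dots$ records the final positions of successive reverse slides governed by the periodic sequence $w^*$. The key observation to exploit is that $\aug(w)$ is, up to truncation, a concrete numerical realization of the abstract slide sequence: the $k$\nth term of $w^*$ equals $i$ precisely when $k \in \{w(i), w(i)+n, w(i)+2n, \dots\}$, and these positions are exactly where $\aug(w)$ places the values $w(i), w(i)+n, \dots, w(i)+(m-1)n$. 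Thus I would first make precise the claim that, in the regime where $n$ is the number of \emph{rows} of $\Rect$, the word $\aug(w)$ is an increasing-per-letter rearrangement tracking the same data as $\trunc_N(w^*)$ for suitable $N$.

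Next I would analyze the RSK insertion of $\aug(w)$ directly. The augmented word interleaves $n$ strictly increasing arithmetic progressions, one for each value $i = 1, \dots, n$, each of length $m$ and with common difference $n$. Since $\aug(w)$ lists these blocks in the order $w(1), w(2), \dots, w(n)$ — reading off $w$ in one-line notation — the relative order of the initial terms $w(1), \dots, w(n)$ encodes $w$ itself, and hence the insertion of the first $n$ symbols produces the insertion tableau of $w$ in its first row/entries. I would then argue that inserting the remaining terms $w(i)+jn$ (for $j \geq 1$) corresponds exactly to the refilling steps in Algorithm~\ref{alg:forward}, where a value strictly larger than $n$ slides into the tableau. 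The crucial point is that because every term exceeding $n$ is obtained by adding a multiple of $n$ to a smaller term already inserted, the bumping behavior in RSK mirrors the \emph{jeu de taquin} slides, and the congruence class modulo $n$ is preserved along each diagonal box, matching the relation $T_w^U[\diag_i] = w(i) + n\cdot\delta_{w^*}^U(i)$ from Proposition~\ref{prop:reformulate}.

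The technical heart will be a bijection or inductive matching between the sequence of boxes filled by RSK insertion of $\aug(w)$ and the sequence $\boxseq_{w^*}^\la+$. I would set this up by induction on the length of the word inserted, maintaining the invariant that after inserting the first $k$ augmented letters, the shape and entries restricted to $\la+$ agree with the partial tableau produced after processing the corresponding initial segment of $w^*$ in Algorithm~\ref{alg:sequence}. Proposition~\ref{prop:descents} provides control over the relative positions of successively filled boxes — ascents in $\sigma$ force the boxes to move up-and-right, descents force down-and-left — which is precisely the monotonicity governing RSK row insertion versus the formation of new rows. Combining this structural agreement with the well-definedness already secured by Theorem~\ref{thm:welldefined} (so that the choice of $U$ is immaterial), one concludes that the restriction of the insertion tableau to $\la+$ coincides with $T_w^\la+$.

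The main obstacle I anticipate is the bookkeeping at the \emph{boundary} of $\la+$: in RSK every inserted letter lands somewhere in the full Young tableau, whereas $T_w^\la+$ is only defined on the subshape $\la+$, so I must verify that the letters which RSK places \emph{outside} $\la+$ correspond exactly to the slides in Algorithm~\ref{alg:forward} that refill boxes of $\la+/\la-$ and contribute to $\delta_{w^*}^\la+$, and do not disturb the entries inside $\la+$. Making this correspondence rigorous requires tracking how bumping chains in RSK interact with the hook structure of $\la+$, and showing that a value $w(i)+jn$ inserted by RSK terminates its bumping chain in the same box that the $j$\nth relevant slide in Algorithm~\ref{alg:forward} would reach. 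I expect this can be handled by appealing to the standard equivalence between RSK insertion and reverse \emph{jeu de taquin} slides (the same equivalence invoked in Section~\ref{sec:strategy} via \cite{BSS}), translated carefully into the augmented-word setting.
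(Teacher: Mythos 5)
There is a genuine gap, and it sits exactly at what you call the technical heart. Your plan conflates two different kinds of sequences: $w^*$ is a \emph{control} sequence with values in $\{1,\dots,n\}$, telling Algorithm~\ref{alg:sequence} which diagonal box $\diag_{\sigma_k}$ to reverse-slide at step $k$, whereas $\aug(w)$ is a word in the alphabet $\{1,\dots,mn\}$ whose letters are tableau \emph{entries} to be inserted. These are not positionally aligned: in $w^*$ the symbol $i$ occurs at positions $w(i), w(i)+n, w(i)+2n, \dots$, interleaved periodically, while $\aug(w)$ lists the entire progression $w(i), w(i)+n, \dots, w(i)+(m-1)n$ as a consecutive block. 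In particular, your claim that inserting the first $n$ letters of $\aug(w)$ produces the insertion tableau of $w$ is false: the first $n$ letters all come from the single block $w(1), w(1)+n, \dots$ (what is true is that the restriction of the final insertion tableau to entries $\leq n$ is the insertion tableau of $w$, by the standard restriction property of RSK). More seriously, the inductive invariant you propose to maintain --- that after $k$ insertions the partial RSK tableau agrees on $\la+$ with the partial state of Algorithm~\ref{alg:forward} --- is precisely the kind of invariant the authors state they were unable to construct: Section~\ref{sec:remarks} observes that the Knuth class of the augmented intermediate tableaux is \emph{not} invariant under the steps of Algorithm~\ref{alg:forward}, and that those intermediate tableaux are not produced by ordinary \emph{jeu de taquin} in any evident way (the refilling of diagonal boxes breaks the usual RSK/\emph{jeu de taquin} dictionary, including the \cite{BSS} equivalence you invoke). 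So the step-by-step matching of bumping chains with slides on which your argument rests cannot be carried out as described.

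The paper's proof avoids any stepwise correspondence by exploiting the two freedoms already established: independence of the slide order (Theorem~\ref{thm:welldefined}) and independence of the diagonal (Theorem~\ref{thm:combine}). It takes the large staircase diagonal $\primela+ := (m, m{+}1, \dots, m{+}n{-}1)$ and orders the slides $m{-}1$ at a time along row $n$, then row $n{-}1$, and so on; each such slide shifts a single row and the refilling rule deposits $T[\diag_i]+n$, so after the first $(m{-}1)n$ slides the intermediate tableau's reading word is \emph{literally} $\aug(w)$, with entries exactly $\{1,\dots,mn\}$. From that point on, the entries $\leq mn$ evolve by ordinary \emph{jeu de taquin} (all later refills are larger and appear at outer corners), so the restriction of $T_w^{\primela+}$ to entries $\{1,\dots,mn\}$ is the insertion tableau of $\aug(w)$; restricting to $\la+$ and invoking Theorem~\ref{thm:combine} finishes the proof. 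In other words, the augmented word enters the argument globally, as the reading word of one carefully arranged intermediate tableau, not letter by letter. To salvage your direct approach you would need a genuinely new invariant of Algorithm~\ref{alg:forward}; the ``choose the right diagonal and the right order'' device is how the paper sidesteps that obstruction.
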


\begin{proof}
Let $\primela+ := (m,m+1,m+2, \dots, m+n-1)$, and
compute $T_w^{\primela+}$ using Algorithm~\ref{alg:forward}.
Choose a sequence of boxes beginning with $m-1$ boxes from
row $n$, followed $m-1$ boxes from row $n-1$, and so on.
(After $m-1$ boxes in from row $1$, the last $n(n-1)/2$ boxes may 
be taken in any order.)
The first $(m-1)n$ slides produce a tableau whose reading word is
$\aug(w)$.  Therefore if we restrict $T_w^{\primela+}$ to entries
$1, 2, \dots, mn$, we obtain the insertion tableau of $\aug(w)$.
By Theorem~\ref{thm:combine}, $T_w^\la+$ can be obtained as
the restriction of $T_w^\primela+$ to $\la+$. Since the entries of
$T_w^\la+$ are a subset of $\{1, 2, \dots, mn\}$, the result follows.
\end{proof}

Theorem~\ref{thm:insertion} provides an alternate definition 
of $T_w^\la+$. It has the advantage of being well-defined, and 
Theorem~\ref{thm:combine} can be proved by using Greene's 
theorem \cite{Gre} to compute the entries $T[\diag_i]$,
(see \cite[Section 5.2]{Rhe}). 
Unfortunately, things start to break down at the proof of
Theorem~\ref{thm:bijection}, which is
intimately connected to Algorithm~\ref{alg:forward}.
The problem is that although the first and last steps of 
Algorithm~\ref{alg:forward} are related to $\aug(w)$,
the intermediate steps may not be.  
For instance, if $T$ is a tableau from one of the intermediate steps 
it is tempting to define $\aug(T)$ to be the tableau obtained by 
adding entries
$T[\diag_i]+n, T[\diag_i]+2n, \dots w(i)+(m-1)n$.
to the right of $\diag_i$.  Unfortunately, it is \emph{not true}
the Knuth class of $\aug(T)$ is invariant for all $T$.  
There are a number of variations on this idea, and none of them
appear to work.
We do not know how to construct an invariant of
Algorithm~\ref{alg:forward}, analogous to the Knuth class of
the reading word.
In particular, the intermediate tableaux in Algorithm~\ref{alg:forward} 
are not produced by ordinary \emph{jeu de taquin} 
in any seemingly obvious way.
This makes it difficult to prove Theorem~\ref{thm:bijection}, if one 
takes Theorem~\ref{thm:insertion} as the definition of $T_w$.

Another way in which our situation behaves quite differently from
ordinary rectification concerns dual equivalence.
Consider a generalization of Algorithm~\ref{alg:sequence}, in which
we allow $U \in \SYT(\la-/\mu)$ to be a skew shape, but otherwise
the algorithm is performed the same way.
This generalization \emph{does not} have the property that
$\boxseq_{w^*}^U = \boxseq_{w^*}^\HU$, when $U$ is dual equivalent
to $\HU$.  If this were true, it would provide a more straightforward 
proof of
Theorem~\ref{thm:welldefined}.  We do not know a set of elementary 
relations that generate the equivalence relation $U \sim \HU$ $\iff$ 
$\boxseq_{w^*}^U = \boxseq_{w^*}^\HU$ for all $w \in S_n$. 

Despite the aforementioned difficulties, 
Theorem~\ref{thm:insertion} can be used as a definition of
$T_w^\la+$ when $\la+$ is an \emph{arbitrary} partition
with at most $n$ rows --- even in cases where Algorithm~\ref{alg:forward}
does not make sense.
In particular, we can sometimes use this idea to define 
$T_w$, when $m < n$.
We illustrate this with an example.  
Take $n=3$, $m=2$, $\la+ = 211$,
$\la- = 1$, and $w = 132$.  The insertion tableau of $\aug(w)$ is
\begin{align*}
{\begin{young}[c]
1 & 2 & 5 \\
3 & 6 \\
4 
\end{young}}
&\qquad\Longrightarrow\qquad
T_w^\la+ = 
{\begin{young}[c]
1 & 2  \\
3 \\
4 
\end{young}}\ \ .
\\
\intertext{Similarly, using~\eqref{eqn:forwardreverse} as the definition, 
we compute:}
T_w^{\Rect/\la-} = 
{\begin{young}[c]
, & 2  \\
3 & 5 \\
4 & 6
\end{young}}
&\qquad\Longrightarrow\qquad
T_w =
{\begin{young}[c]
1 & 2  \\
3 & 5 \\
4 & 6
\end{young}}\ \ ,
\end{align*}
%
which, indeed, has order $3$ under promotion.
This is very suggestive, 
but it is unclear what to do with the proof 
of Theorem~\ref{thm:bijection}, when $m<n$.  


In the thesis \cite{Rhe}, the second author observed that
a procedure based on rectification can be used construct the
set $\calO_{mn/2}$, when one of $m$, $n$ is even.
%
%
In this case, other bijections are known,
(see \cite[Proposition 3.10]{Pur-ribbon}); 
however it is not obvious that they are equivalent.  
This provides a new perspective, and gives further hints that the methods
introduced in this paper may apply beyond the case of minimal 
orbits.


\bigskip

\footnotesize%
   \textsc{Combinatorics and Optimization Department, 
       University of Waterloo, 200 University Ave. W.  Waterloo, 
       ON, N2L 3G1, Canada.} \texttt{kpurbhoo@uwaterloo.ca}. 

   \medskip

   \textsc{Operations Research Center, MIT, 77 Mass Ave., Building E40--130,
     Cambridge, MA, 02139-4307, USA.}
\texttt{donguk@mit.edu}.
\end{document}